\documentclass{amsart}
\usepackage{enumerate}
\usepackage{amssymb}
\usepackage{mathtools}
\usepackage{mathrsfs}
\usepackage{color}
\usepackage[
colorlinks=true,
linkcolor=blue,
citecolor=blue,
urlcolor=blue]{hyperref}

\numberwithin{equation}{section}
\newtheorem{thm}{Theorem}[section]
\newtheorem{lem}[thm]{Lemma}
\newtheorem{defi}[thm]{Definition}
\newtheorem{rem}[thm]{Remark}
\newtheorem{cor}[thm]{Corollary}

\newtheorem{prop}[thm]{Proposition}

\newcommand{\N}{\mathbb N}

\title[Uniformly convex stalks and $\ell_{\infty}(E)$]{Uniformly convex stalks and Kucher's $\ell_{\infty}(E)$ problem}

\author[H. Li]{Hui Li}
 \address{School of Mathematics  and Statistics, Central South University, HNP-LAMA, Changsha 410075, China and  Department of Mathematics}
 \email{lihuicsu@csu.edu.cn}	
	
\author[S. Luo]{Sijie Luo}
\address{School of Mathematics and Statistics, Central South University, HNP-LAMA, Changsha 410075, China}
\email{sijieluo@csu.edu.cn}
	
\author[L. Wu]{Lian Wu}
\address{School of Mathematics and Statistics, Central South University, HNP-LAMA, Changsha 410075, China}
\email{wulian@csu.edu.cn}	

\author[D. Zhou]{Dejian Zhou}
\address{School of Mathematics and Statistics, Central South University, HNP-LAMA, Changsha 410075, China}
\email{zhoudejian@csu.edu.cn}
\date{}
\subjclass[2020]{Primary 46B20; Secondary 46B10, 46B25.}
\keywords{Grothendieck spaces, Pe{\l}czy\'{n}ski's property $(V)$, super-reflexive spaces, function modules}

\begin{document}
\begin{abstract}
Kucher asked whether $\ell_{\infty}(E)$ is a Grothendieck space for every super-reflexive Banach space $E$. We answer this question affirmatively by proving the stronger result that $\ell_{\infty}(E)$ has Pełczyński's property $(V)$. Combined with the natural dual representation of $\ell_{\infty}(E)$, this yields that every non-weakly compact operator on $\ell_{\infty}(E)$ fixes a copy of $\ell_{\infty}$.
The main ingredient is a general property-$(V)$ theorem for function modules
over zero-dimensional compact Hausdorff spaces with stalks admitting a common modulus of uniform convexity. The proof is based on Gierz's integral representation of functionals and the weak compactness of the associated canonical measures. In the case of $\ell_{\infty}(E)$, the relevant stalks are ultrapowers of $E$, and super-reflexivity provides the uniform geometric control required by the theorem. We also derive several consequences for operators on $\ell_{\infty}(E)$, including weak compactness into separable spaces, reflexivity of separable quotients, and compactness of operators into Schur spaces.
\end{abstract}
\maketitle

\section{Introduction}\label{Introduction}

A Banach space $X$ is called \emph{Grothendieck} if every weak-$*$ convergent sequence in $X^{*}$ is weakly convergent. Grothendieck's theorem for $\ell_{\infty}$ provides the fundamental nonreflexive example \cite{Grothendieck1953}. The property admits an important operator characterization. Namely, $X$ is Grothendieck if and only if every operator from $X$ into a separable Banach space is weakly compact. Equivalently, it is enough to verify weak compactness only for operators from $X$ into $c_{0}$ \cite{Diestel1984,GonzalezKania2021}.

The stability of the Grothendieck property under direct sums is well understood when the outer sequence space is reflexive. In particular, if a reflexive Banach space with an unconditional basis is used as the outer space of a direct sum, then the resulting space is Grothendieck precisely when each summand is Grothendieck, see  \cite[Section~0.21]{GonzalezKania2021}. The endpoint case
\[
\ell_{\infty}(E) = \left( \bigoplus_{n=1}^{\infty} E \right)_{\ell_{\infty}}
\]
is fundamentally different, since the outer space is nonreflexive and the standard arguments for reflexive direct sums no longer apply. Earlier positive results at this endpoint relied on additional order structure. Leung proved the result for certain countably order-complete Banach lattices \cite{Leung1988}, and Leung--R\"{a}biger further investigated the role of property $(V)$ in $\ell_{\infty}$-sums \cite{LeungRabiger1990}. Kucher later showed that $\ell_{\infty}(E)$ is Grothendieck when $E$ is a $B$-convex Banach lattice and asked whether the lattice assumption could be removed, namely whether the same conclusion holds for every super-reflexive Banach space $E$ \cite{Kucher1999}. This question was subsequently listed as Problem~29 by González and Kania \cite{GonzalezKania2021}.

Our approach is based on Pełczyński's property $(V)$, which provides a natural connection between unconditional convergence and weak compactness. We refer to Section~\ref{Preliminaries} for the relevant definitions and background. Pełczyński introduced this property and proved that every scalar $C(K)$-space has property $(V)$ \cite{Pelczynski1962}. For a Banach space isomorphic to a complemented subspace of a dual space, property $(V)$ has a stronger consequence. Every non-weakly compact operator fixes a copy of $\ell_{\infty}$, and in particular the space is Grothendieck. The precise statement is recalled in Proposition~\ref{prop:dual-bridge}. In the sequel, we assume that $E$ is a super-reflexive Banach space. The canonical dual representation
\[
\ell_{\infty}(E) = \ell_{\infty}(E^{**}) \cong (\ell_{1}(E^{*}))^{*}
\]
places $\ell_{\infty}(E)$ precisely in this setting. We therefore seek to establish property $(V)$ directly.

\begin{thm}\label{thm:main}
Let $E$ be a complex super-reflexive Banach space. Then $\ell_{\infty}(E)$ has Pe{\l}czy\'{n}ski's property $(V)$ and is a Grothendieck space.
\end{thm}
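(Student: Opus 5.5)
The plan follows the route announced in the abstract: realize $\ell_{\infty}(E)$ as a function module, verify the hypotheses of a general property-$(V)$ theorem for such modules, and then use the dual representation to get the Grothendieck property.

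\emph{Step 1: the function-module representation.} Let $K=\beta\N$, which is zero-dimensional compact Hausdorff. For $x=(x_n)\in\ell_{\infty}(E)$ and $t\in K$, the map $t\mapsto \|x\|(t):=\lim_{n\to t}\|x_n\|$ is the continuous extension of $n\mapsto\|x_n\|$. Points of $\N$ have stalk $E$. A free ultrafilter $t=\mathcal U$ has stalk $\ell_{\infty}(E)/\{x:\lim_{\mathcal U}\|x_n\|=0\}$, which is the ultrapower $E_{\mathcal U}$. Multiplication by $C(K)=\ell_{\infty}$ is coordinatewise. Thus $\ell_{\infty}(E)$ is a $C(K)$-module with upper semicontinuous (in fact continuous) norm function.

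\emph{Step 2: uniform convexity of the stalks.} Since $E$ is super-reflexive, Enflo--Pisier renorming gives an equivalent uniformly convex norm, say with modulus $\delta$. Property $(V)$ is an isomorphic invariant, so we may work with this norm. Every ultrapower of $E$ is then uniformly convex with the same modulus $\delta$, because the defining inequality passes to $\mathcal U$-limits. Hence all stalks share a common modulus of uniform convexity.

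\emph{Step 3: the general theorem.} The core claim is that a function module over a zero-dimensional $K$ whose stalks have a common modulus $\delta$ has property $(V)$. Take a bounded set $H\subset X^*$ that is not relatively weakly compact. We must produce a weakly unconditionally Cauchy series $\sum x_k$ with $\sup_{h\in H}|h(x_k)|\not\to0$.
\begin{itemize}
\item By Gierz's theorem, each $h\in X^*$ has the form $h(x)=\int_K \phi_t(x(t))\,d\mu_h(t)$, with $\mu_h$ a positive canonical measure of mass $\|h\|$ and $\phi_t$ norm-one functionals on the stalks.
\item The key lemma should state: if $\{\mu_h:h\in H\}$ is relatively weakly compact in $M(K)$, then $H$ is relatively weakly compact.
\item To prove the lemma I would use the uniform convexity. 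Norming vectors of uniformly convex stalks are unique and depend uniformly continuously on the functional, so the fibrewise parts of $H$ form a uniformly "convex-type" family. Then the only obstruction to weak compactness is measure concentration, controlled through Dunford--Pettis-type uniform integrability. The mechanism is that $\delta$ prevents mass from cancelling along the fibres.
\end{itemize}

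Granting the lemma, non-weak-compactness of $H$ yields non-weak-compactness of $\{\mu_h\}$. Grothendieck's criterion then gives disjoint clopen sets $U_k$ (zero-dimensionality makes them clopen) and elements $h_k\in H$ with $\mu_{h_k}(U_k)\ge\varepsilon$. Using the module structure, build $x_k=\chi_{U_k}y_k$ with $\|y_k\|\le1$ and $h_k(x_k)\ge\varepsilon/2$. Uniform convexity lets one choose $y_k$ nearly norming $\phi_t$ on most of $U_k$ with respect to $\mu_{h_k}$. Disjointness of supports gives $\|\sum_{k\in F}\pm x_k\|\le 1$, so $\sum x_k$ is weakly unconditionally Cauchy.

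\emph{Step 4: Grothendieck.} We have $\ell_{\infty}(E)\cong(\ell_1(E^*))^*$, a dual space. By Proposition~\ref{prop:dual-bridge}, every non-weakly compact operator on it fixes a copy of $\ell_\infty$. An operator into $c_0$ cannot fix $\ell_\infty$, so it is weakly compact, and therefore $\ell_{\infty}(E)$ is Grothendieck.

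\emph{Main obstacle.} I expect the hardest part to be the key lemma and the near-norming construction in Step 3. One must choose $y_k$ globally on $U_k$ even though the $\phi_t$ vary only measurably. The plan is to approximate $\phi_t$ by finitely many clopen pieces, which zero-dimensionality permits, and to use the common modulus $\delta$ to make the approximation error uniform in $t$.
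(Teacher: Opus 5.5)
\textbf{The key lemma is unproved.} Your architecture matches the paper's: the module over $\beta\mathbb{N}$ with ultrapower stalks, Enflo--Pisier renorming to get a common modulus, reduction to the canonical Gierz measures, and Grothendieck via $\ell_\infty(E)\cong(\ell_1(E^*))^*$ and Proposition~\ref{prop:dual-bridge}. The skeleton of your w.u.c.\ series construction is the paper's Lemma~\ref{Canonical-measure compactness}. The gap is your ``key lemma'': that relative weak compactness of $\{\mu_h:h\in H\}$ forces relative weak compactness of $H$. This is the entire hard half of the theorem. Your heuristic (uniformly continuous dependence of norming vectors on the stalk functionals, ``$\delta$ prevents cancellation along the fibres'') does not lead to an argument.

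The missing idea is a factorization through a reflexive space. Given $(h_n)\subset H$, set $\lambda=\sum_n2^{-n}\mu_{h_n}$ and $g_n=d\mu_{h_n}/d\lambda$. Weak compactness of the measures makes $(g_n)$ uniformly integrable (Dunford--Pettis). So replacing $g_n$ by $g_n\wedge R$ in the Gierz integral changes $h_n$ in norm by at most $\int_K(g_n-R)_+\,d\lambda$, which is small uniformly in $n$. Put $M=\sup_n\|h_n\|$. The truncated functional $h_{n,R}$ then satisfies $|h_{n,R}(z)|\le\sqrt{RM}\,(\int_K\|z(t)\|^2\,d\lambda)^{1/2}$. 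Hence $h_{n,R}=J_\lambda^*g$ with $g$ in a fixed ball of $H_\lambda^*$. Here $H_\lambda$ is the completion of the module under this $L_2(\lambda)$-type seminorm, and $J_\lambda$ is the canonical map into it. This is exactly where the common modulus enters: a Day-type argument shows $H_\lambda$ is uniformly convex, hence reflexive. So each truncated family is relatively weakly compact. A diagonal subsequence combined with the uniform truncation error then gives a weakly convergent subsequence of $(h_n)$.

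\textbf{Misplaced difficulty and a missing reduction.} The uniform convexity and the ``main obstacle'' you locate in constructing the $x_k$ are unnecessary. For clopen $C$, Gierz's $L$-projection $p_C$ coincides with $P_C^*$, where $P_Cz=\mathbf{1}_{C}z$, by uniqueness of the $L$-projection with range $N_C^\perp$. Hence $\mu_h(C)=\|h\circ P_C\|$. You simply take $u$ in the unit ball nearly norming $h_k\circ P_{C_k}$ and set $x_k=P_{C_k}u$, with no approximation of the measurable field $t\mapsto\phi_t$. You do need inner regularity to pass from the disjoint open sets supplied by Dieudonn\'e--Grothendieck to compact subsets, and then to clopen sets.

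Finally, you apply Gierz's representation directly to $\ell_\infty(E)$. The representation requires a uniformly separable module, and $\ell_\infty(E)$ is not one. The paper instead takes an unconditionally converging, non-weakly compact $T$ and a bounded sequence $(x_m)$ for which $(Tx_m)$ has no weakly convergent subsequence. It restricts $T$ to the closed submodule generated by $(x_m)$. It then checks that this submodule is uniformly separable, with stalks $\overline{\operatorname{span}}\{x_m(\mathcal U)\}$. These are closed subspaces of $E_{\mathcal U}$ and so inherit the common modulus.
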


Theorem~\ref{thm:main} yields more than the Grothendieck property. It shows that every non-weakly compact operator defined on $\ell_{\infty}(E)$ fixes a copy of $\ell_{\infty}$. The relevant structure of $\ell_{\infty}(E)$ is its natural $C(\beta\mathbb{N})$-module structure. Unlike the scalar identity $\ell_{\infty} = C(\beta\mathbb{N}),$ the vector-valued space $\ell_{\infty}(E)$ cannot, in general, be viewed as a constant-stalk function space when $E$ is infinite-dimensional. The stalks arising from this representation are ultrapowers of $E$, rather than copies of $E$ itself. Super-reflexivity is therefore the natural assumption, as an equivalent uniformly convex renorming of $E$ yields a common modulus of convexity for all of its ultrapowers. This observation leads to our main structural theorem for function modules, which reveals the underlying principle behind the $\ell_{\infty}(E)$ case.

For a nonzero Banach space $G$, its modulus of convexity is defined by
\begin{equation}\label{eq:modulus-definition}
\delta_G(\varepsilon)=\inf\left\{1-\left\|\frac{u+v}{2}\right\|: \|u\|=\|v\|=1,\ \|u-v\|\ge\varepsilon\right\},\qquad 0<\varepsilon\le2.
\end{equation}

\begin{thm}\label{thm:module}
Let $K$ be a zero-dimensional compact Hausdorff space and let $Z$ be a uniformly separable complex function module over $K$. Suppose that there is a function $\delta:(0,2]\to(0,1]$ such that the stalks admit a common uniform convexity estimate
\begin{equation}\label{eq:common-modulus}
\delta_{Z_t}(\varepsilon)\ge\delta(\varepsilon)>0,
\qquad t\in K,\ 0<\varepsilon\le2.
\end{equation}
Then $Z$ has property $(V)$. 
\end{thm}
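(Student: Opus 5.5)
By Pełczyński's criterion, property $(V)$ for $Z$ amounts to the following statement. Every bounded subset $H\subset Z^{*}$ that is a \emph{$(V)$-set},
\[
\lim_{n}\sup_{\phi\in H}|\phi(z_n)|=0\quad\text{for every weakly unconditionally Cauchy series }\textstyle\sum z_n \text{ in } Z,
\]
is relatively weakly compact. The plan is to transfer this condition to the representing measures of the functionals in $H$. Once there, we can use two facts: scalar $C(K)$ has property $(V)$, and each stalk is uniformly convex, hence reflexive. The first fact controls the $K$-direction and the second controls the fibre direction.

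\textbf{Step 1 (Gierz representation).} For each $\phi\in Z^{*}$, Gierz's integral representation gives a canonical positive Radon measure $\mu_\phi$ on $K$ with $\|\mu_\phi\|=\|\phi\|$, together with a weak-$*$ measurable field $t\mapsto\phi_t\in Z_t^{*}$ of norm-one functionals such that
\[
\phi(z)=\int_K \phi_t(z(t))\,d\mu_\phi(t).
\]
For $f\in C(K)$ this gives $f\phi(z)=\phi(fz)=\int f\,\phi_t(z(t))\,d\mu_\phi$. Also $\mu_\phi(U)=\|\chi_U\phi\|$ for clopen $U$, where $\chi_U\phi(z)=\phi(\chi_U z)$; zero-dimensionality guarantees that $\chi_U z$ lies in $Z$.

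\textbf{Step 2 (the measures $\{\mu_\phi:\phi\in H\}$ are weakly compact).} Suppose they are not. By Grothendieck/Dieudonné-type criteria on zero-dimensional $K$, we get $\varepsilon>0$, pairwise disjoint clopen sets $U_n$, and $\phi_n\in H$ with $\mu_{\phi_n}(U_n)=\|\chi_{U_n}\phi_n\|>\varepsilon$. Choose $z_n\in Z$ with $\|z_n\|\le1$, supported in $U_n$, and $|\phi_n(z_n)|>\varepsilon$. Disjointness of supports makes $\|\sum_{n\in F}\pm z_n\|\le1$ for every finite $F$, so $\sum z_n$ is weakly unconditionally Cauchy. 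This contradicts the $(V)$-set condition. Hence the family of measures is uniformly countably additive, i.e.\ relatively weakly compact in $C(K)^{*}$. Uniform separability is used here to ensure measurability of the fields and to pass to a separable sub-situation when extracting sequences.

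\textbf{Step 3 (fibre control, the main obstacle).} Take a sequence $(\phi_n)$ in $H$; we must extract a weakly convergent subsequence. By Step 2 all $\mu_{\phi_n}$ are absolutely continuous with respect to a single $\lambda=\sum 2^{-n}\mu_{\phi_n}$, with uniformly integrable densities $g_n$. Thus $\phi_n$ corresponds to the field $g_n\phi_{n,t}$ in an $L^1(\lambda)$-type Bochner space of sections of the dual bundle. The stalk duals $Z_t^{*}$ are uniformly smooth with a common modulus coming from \eqref{eq:common-modulus}. A uniform version of Diestel's theorem should then apply: in $L^1(\lambda;X)$ with $X$ reflexive (here, uniformly over the varying stalks), uniform integrability plus uniform boundedness gives relative weak compactness. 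I would try to prove this by hand, approximating each field on a finite clopen partition of $K$ and treating the result as an $\ell_\infty$-direct sum of finitely many uniformly convex pieces.

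The delicate point is that weak limits must be identified inside $Z^{*}$ rather than in the ambient bundle, and the common modulus $\delta$ must prevent "oscillation" inside the fibres. The tool for this is uniform convexity: if the $\phi_n$ norm the same elements approximately, then their fibre parts are close, with control by $\delta$. This yields weak Cauchyness of $(\phi_n(z))_n$ uniformly on the unit ball of $Z^{**}$ modulo small sets. Combined with Step 2, Eberlein–Šmulian then gives relative weak compactness of $H$, and therefore property $(V)$.
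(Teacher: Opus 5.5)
Your Steps 1 and 2 are sound and coincide with the paper's argument. The $(V)$-set formulation is equivalent to working with $T^{*}(B_{Y^{*}})$ for an unconditionally converging $T$, and the disjoint-clopen-support construction is exactly the paper's compactness lemma for the canonical measures. The passage to a control measure $\lambda$ with uniformly integrable densities is also what the paper does next.

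The gap is Step~3. This is where the real difficulty of the theorem lies, and there you only assert that a ``uniform version of Diestel's theorem should apply.'' The route you suggest for proving it does not work. A finite clopen partition gives $Z=\bigoplus_{\infty}\mathbf{1}_{C_i}Z$, but each summand $\mathbf{1}_{C_i}Z$ is again a sup-normed function module over $C_i$. As soon as $C_i$ carries infinitely many nonzero stalks, it contains an isometric copy of $c_0$ spanned by disjointly supported sections, so it is nowhere near uniformly convex. With varying stalks there is also no meaning to approximating a field by one that is ``constant'' on each piece. Moreover, uniform integrability does not give uniform approximation of a whole sequence of densities by step functions on one fixed finite partition. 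The closing heuristic, about fibre parts being close when the $\phi_n$ approximately norm the same elements, is not an argument. Read literally, uniformity over $B_{Z^{**}}$ would mean norm Cauchyness, which one cannot expect.

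The missing idea is the mechanism behind the standard proof of Diestel's theorem, transplanted to the bundle. It also settles your ``delicate point'' about identifying limits inside $Z^{*}$.
\begin{enumerate}[\rm (1)]
\item Truncate the densities: set $k_{n,R}=h_n\wedge R$ with $h_n=d\mu_{\phi_n}/d\lambda$, and put $\phi_{n,R}(z)=\int_K\xi_n(t)(z(t))\,k_{n,R}(t)\,d\lambda(t)$. Uniform integrability gives $\sup_n\|\phi_n-\phi_{n,R}\|\le\sup_n\int_K(h_n-R)_+\,d\lambda\to0$.
\item By Cauchy--Schwarz, $|\phi_{n,R}(z)|\le\sqrt{RM}\,q_\lambda(z)$, where $M=\sup_n\|\phi_n\|$ and $q_\lambda(z)=\bigl(\int_K\|z(t)\|^2\,d\lambda\bigr)^{1/2}$.
\item The completion $H_\lambda$ of $Z/\ker q_\lambda$ is built from $Z$ itself, not from an ambient bundle. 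A Day-type estimate shows it inherits a modulus of convexity from the common $\delta$, so $H_\lambda$ is reflexive.
\item Hence $\phi_{n,R}=J_\lambda^{*}g_{n,R}$ with $(g_{n,R})_n$ bounded in $H_\lambda^{*}$. Since $J_\lambda^{*}$ is weak-to-weak continuous, $\{\phi_{n,R}:n\in\N\}$ is relatively weakly compact in $Z^{*}$ for each fixed $R$.
\item A diagonal, Grothendieck-type approximation argument then yields a weakly convergent subsequence of $(\phi_n)$.
\end{enumerate}
This is where the common modulus is actually used: through the $L_2(\lambda)$-aggregated norm, not through any finite partition. Without this step, or an equivalent one, your proposal does not prove the theorem.
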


The theorem shows that the property $(V)$ is determined by the uniform geometry of the stalks rather than by the particular form of the module. We now apply it to $\ell_{\infty}(E)$. Its canonical $C(\beta\mathbb{N})$-module structure has ultrapower stalks, and super-reflexivity provides the common modulus of uniform convexity required by the theorem.
Therefore, $\ell_{\infty}(E)$ has property $(V)$, which resolves Kucher's problem and yields the stronger operator-theoretic conclusion stated in Theorem~\ref{thm:main}. The remaining consequences, including the structure of operators and separable quotients, are discussed in Corollary \ref{remaining}.

The rest of the paper is organized as follows. 
In Section~\ref{Preliminaries} , we provide the necessary background and notation. 
Section~\ref{measure-integrability} establishes the weak compactness of the canonical scalar measures and the uniform integrability of the corresponding Radon--Nikod\'ym derivatives. In Section~\ref{proof of Theorem 1.2}, we combine these compactness arguments with the common modulus of uniform convexity of the stalks to establish the general function-module theorem (i.e. Theorem~\ref{thm:module}). Section~\ref{proof of Theorem 1.1} identifies the stalks of the natural $C(\beta\mathbb{N})$-module structure of $\ell_\infty(E)$, carries out the required reduction to uniformly separable submodules, and applies the function-module theorem to prove Theorem~\ref{thm:main}. It also derives the resulting consequences for operators and separable quotients.

\textbf{Scalar convention} Gierz's 1987 construction is stated for real functionals. We therefore prove the function-module theorem over $\mathbb{R}$. The complex case then follows by passing to the underlying real structure. Indeed, every continuous real functional on a complex Banach space is the real part of a continuous complex-linear functional. Hence the real and complex weak topologies agree, as do weak compactness, weak unconditional convergence, unconditional convergence, and the Grothendieck property. Moreover, uniform convexity and super-reflexivity are invariant under this realification, and $\ell_{\infty}(E)_{\mathbb{R}} = \ell_{\infty}(E_{\mathbb{R}})$. Thus a complex-linear operator may be treated as a real-linear operator in the argument.

In the sequel, we write $B_X$ for the closed unit ball of a Banach space $X$. The space $M(K)$ consists of finite regular scalar Borel measures on a compact Hausdorff space $K$, with the total variation norm, thus $M(K)=C(K)^{*}$. Weak compactness in $M(K)$ always refers to $\sigma(M(K),M(K)^{*})$, rather than to the weak-$*$ topology induced by $C(K)$. 

\section{Preliminaries}\label{Preliminaries}


A series $\sum_{n} x_{n}$ in a Banach space $X$ is \emph{weakly unconditionally convergent} (w.u.c.) if $\sum_{n}|x^{*}(x_{n})|<\infty$ for every $x^{*}\in X^{*}$. By the uniform boundedness principle, the weak unconditional convergence of $\sum_{n} x_{n}$ admits the following equivalent characterization:
\begin{equation}\label{eq:wuc-test}
\sum_n x_n\text{ is w.u.c.}
\quad\Longleftrightarrow\quad
\sup_{\substack{F\subset\N\\F\text{ finite}}}
\ \sup_{|\alpha_n|\le1}
\left\|\sum_{n\in F}\alpha_nx_n\right\|<\infty.
\end{equation}
An operator $T : X \to Y$ is \emph{unconditionally converging} if $\sum_{n} Tx_{n}$ converges unconditionally whenever $\sum_{n} x_{n}$ is w.u.c. The space $X$ has \emph{Pe{\l}czy\'{n}ski's property $(V)$} if every unconditionally converging operator from $X$ to an arbitrary Banach space is weakly compact, see \cite{Pelczynski1962}. We also need the following result established by Gonz\'{a}lez and Kania.

\begin{prop}[{\cite[Proposition~3.2.5]{GonzalezKania2021}}]\label{prop:dual-bridge}
If a Banach space $X$ has property $(V)$ and is isomorphic to a complemented subspace of a dual space. Then every non-weakly compact operator from $X$ is an isomorphism on a subspace isomorphic to $\ell_{\infty}$. In particular, $X$ is Grothendieck.
\end{prop}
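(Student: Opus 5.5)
This is Proposition~3.2.5 of \cite{GonzalezKania2021}, and I would prove it by combining Pełczyński's characterization of unconditionally converging operators with Rosenthal's $\ell_\infty$-theorem. The dual-space hypothesis is what lets us extend a copy of $c_0$ to a copy of $\ell_\infty$. Fix a non-weakly compact operator $T\colon X\to Y$.

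\textbf{Step 1: a bad w.u.c.\ series.} Since $X$ has property $(V)$, $T$ is not unconditionally converging. So there is a w.u.c.\ series $\sum_n x_n$ in $X$ for which $\sum_n Tx_n$ fails to converge unconditionally. Because it is not unconditionally Cauchy, there are $\varepsilon>0$ and pairwise disjoint finite sets $F_k\subset\N$ with $\|T u_k\|\ge\varepsilon$, where $u_k=\sum_{n\in F_k}x_n$. The series $\sum_k u_k$ is again w.u.c., so by \eqref{eq:wuc-test} we have $C:=\sup\{\|\sum_{k\in F}\alpha_k u_k\| : F \text{ finite},\ |\alpha_k|\le1\}<\infty$.

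\textbf{Step 2: extend to an operator on $\ell_\infty$.} Let $J\colon X\to W^*$ be an isomorphic embedding onto a complemented subspace, and let $P\colon W^*\to J(X)$ be a bounded projection. For $w\in W$, the scalars $\langle Ju_k,w\rangle$ are absolutely summable, by weak unconditional convergence. Hence for every $a=(a_k)\in\ell_\infty$ the formula
\[
\Phi(a)(w)=\sum_k a_k\langle Ju_k,w\rangle
\]
defines a functional $\Phi(a)\in W^*$ with $\|\Phi(a)\|\le \|J\|\,C\,\|a\|_\infty$. To get this bound, evaluate at $w$ and use the finite partial sums together with the uniform bound from \eqref{eq:wuc-test}. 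Thus $\Phi\colon\ell_\infty\to W^*$ is bounded and $\Phi(e_k)=Ju_k$. Put
\[
S=J^{-1}P\Phi\colon \ell_\infty\to X .
\]
Then $Se_k=u_k$, so the composition $R=TS\colon\ell_\infty\to Y$ satisfies $\|Re_k\|=\|Tu_k\|\ge\varepsilon$ for all $k$.

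\textbf{Step 3: Rosenthal's theorem.} This is the key step. I would invoke Rosenthal's result: an operator $R\colon\ell_\infty\to Y$ with $\inf_k\|Re_k\|>0$ is an isomorphism on $\ell_\infty(M)$ for some infinite $M\subset\N$. Its proof chooses $y_k^*\in B_{Y^*}$ with $|y_k^*(Re_k)|\ge\varepsilon/2$. It then applies Rosenthal's disjointification lemma to the measures $R^*y_k^*\in \ell_\infty^*$, restricted to subsets of $\N$, to find $M$ along which the off-diagonal mass is below $\varepsilon/4$. This yields $\|Ra\|\ge(\varepsilon/4)\|a\|_\infty$ for $a$ supported on $M$. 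Since $R=TS$, the operator $S$ is also bounded below on $\ell_\infty(M)$. Hence $S(\ell_\infty(M))$ is a subspace of $X$ isomorphic to $\ell_\infty$ on which $T$ is an isomorphism.

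\textbf{Step 4: the Grothendieck property.} It suffices to show that every operator $X\to c_0$ is weakly compact. A non-weakly compact one would, by the above, fix a copy of $\ell_\infty$ inside the separable space $c_0$, which is impossible since $\ell_\infty$ is nonseparable.

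The only delicate point is Step 3. Everything else is bookkeeping; the dual structure is used solely to define $\Phi$ via weak$^*$ sums.
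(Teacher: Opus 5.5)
The paper gives no proof of this proposition and simply imports it from Gonz\'{a}lez--Kania. Your argument is correct and is the standard proof behind that citation: failure of unconditional convergence gives a w.u.c.\ series $\sum_k u_k$ with $\|Tu_k\|\ge\varepsilon$; the dual-space hypothesis extends $e_k\mapsto u_k$ to a bounded $S\colon\ell_\infty\to X$ via weak$^*$ sums and the projection; and Rosenthal's theorem applied to $TS$ gives an $\ell_\infty(M)$ on which $TS$ is bounded below, hence $T$ is an isomorphism on $S(\ell_\infty(M))$. Your quantitative details also check out, namely $\|\Phi(a)\|\le\|J\|\,C\,\|a\|_\infty$, the estimate $\|Ra\|\ge(\varepsilon/4)\|a\|_\infty$ on $\ell_\infty(M)$, and the reduction of the Grothendieck property to operators into $c_0$.
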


Property $(V)$ and the Grothendieck property are isomorphic invariants. We will therefore renorm spaces without further comment.
A compact Hausdorff space is \emph{zero-dimensional} if its clopen sets form a base. In such a space, every compact subset of an open set is contained in a clopen subset of that open set. Also, locally constant scalar functions are uniformly dense in $C(K)$. We recall the relevant definitions and the precise part of Gierz’s theorem needed below, referring to \cite{Gierz1982,Gierz1987} for further details. 

\begin{defi}[Gierz]\label{def:module}
Let $K$ be a compact Hausdorff space and let $(Z_{t})_{t\in K}$ be Banach spaces. A \emph{function module} over $K$ is a closed linear subspace
\[
Z\subset\prod_{t\in K}Z_{t},\qquad
\|z\|=\sup_{t\in K}\|z(t)\|,
\]
with the following properties:
\begin{enumerate}[\rm (i)]
\item $t\mapsto\|z(t)\|$ is upper semicontinuous for every $z\in Z$;
\item evaluation $z\mapsto z(t)$ maps $Z$ onto $Z_{t}$ for every $t\in K$;
\item $fz\in Z$ for $f\in C(K)$ and $z\in Z$, where $(fz)(t)=f(t)z(t)$.
\end{enumerate}
For a given $t\in K$, the Banach space $Z_{t}$ is called the stalk over $t$. 
\end{defi}

The module is \emph{uniformly separable} if there is a countable set $D \subset Z$ such that $\{d(t) : d \in D\}$ is dense in every $Z_{t}$. Equivalently, one may use a separable linear subspace of $Z$ with stalkwise dense evaluations. Over $\mathbb{C}$, the countable core can be taken over $\mathbb{Q} + i\mathbb{Q}$.
For a closed set $A \subset K$, put
\[
N_{A} = \{z \in Z : z|_{A} = 0\}.
\]
Gierz proves that $N_{A}$ is an $M$-ideal and constructs associated $L$-projections on $Z^{*}$. The construction extends through a complete Boolean algebra containing all Borel subsets of $K$. The following formulation collects the relevant results from Sections 2--3 and Theorem 3.2 of \cite{Gierz1987}.

\begin{thm}[Gierz]\label{Gierz}
Let $Z$ be a uniformly separable real function module over a compact Hausdorff space $K$. For every $\phi \in Z^{*}$ there are:

\begin{enumerate}[\rm (i)]
\item $L$-projections $p_{M}$ associated with Borel subsets $M \subset K$;

\item a finite positive regular Borel measure
\begin{equation}\label{Gierz-measure}
\mu_{\phi}(M) = \|p_{M}\phi\|, \qquad \mu_{\phi}(K) = \|\phi\|; 
\end{equation}

\item stalk functionals $\xi_{\phi}(t) \in Z_{t}^{*}$ with $\|\xi_{\phi}(t)\| \leq 1$, such that $t \mapsto \xi_{\phi}(t)(z(t))$ is Borel measurable for every $z \in Z$ and
\begin{equation} \label{Gierz-representation}
\phi(z) = \int_{K} \xi_{\phi}(t)(z(t)) \, d\mu_{\phi}(t) \qquad (z \in Z). 
\end{equation}
\end{enumerate}
The measure in \eqref{Gierz-representation} is the canonical measure constructed in \eqref{Gierz-measure}.
\end{thm}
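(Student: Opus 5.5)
The statement just above is Gierz's theorem, quoted from \cite{Gierz1987}. A proof therefore amounts to reconstructing Gierz's argument in the form needed here. The plan has four steps: build $L$-projections from the $M$-ideals $N_A$, extend them to Borel sets, define $\mu_\phi$ as the norm of the projected functional, and obtain the stalk functionals as a Radon--Nikod\'ym-type density.

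\textbf{Step 1: $M$-ideals and $L$-projections.} For a closed set $A\subset K$ I first show that $N_A$ is an $M$-ideal in $Z$. I would verify the $3$-ball property directly. Given $z\in Z$, $n_i\in B_{N_A}$ and $\varepsilon>0$, use upper semicontinuity of $t\mapsto\|z(t)\|$ and a continuous cutoff $f$ with $0\le f\le1$, $f=1$ near $A$ and $f=0$ where $\|z(t)\|$ exceeds $\|z|_A\|+\varepsilon$. The element $fz+(1-f)n$ then gives the required approximant, since the module action by $C(K)$ is available by Definition~\ref{def:module}(iii).

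It follows that $N_A^{\perp}$ is the range of an $L$-projection on $Z^*$; call it $p_A$. $L$-projections commute and form a complete Boolean algebra (Cunningham). I also check that $A\mapsto p_A$ is a lattice homomorphism on closed sets: $p_{A\cap B}=p_Ap_B$ and $p_{A\cup B}=p_A\vee p_B$. The second identity uses $N_{A\cup B}=N_A\cap N_B$ together with the $M$-ideal sum formula.

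\textbf{Step 2: extension to Borel sets and the measure.} For open $U$ I set $p_U=\sup\{p_A: A\subset U \text{ closed}\}$ in the complete Boolean algebra. I then extend to all Borel sets $M$ via inner regularity. Define $\mu_\phi(M)=\|p_M\phi\|$. Finite additivity follows from the $L$-property $\|\phi\|=\|p\phi\|+\|(1-p)\phi\|$. Countable additivity and regularity follow from $\sigma$-order continuity of $p\mapsto\|p\phi\|$, which holds because the $L$-norm is additive on disjoint projections and bounded by $\|\phi\|$. Also $\mu_\phi(K)=\|\phi\|$ since $p_K=1$, as $N_K=0$. This gives (i) and (ii).

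\textbf{Step 3: stalk functionals.} This is the main obstacle. For each $z$ in the countable core $D$, the set function $M\mapsto (p_M\phi)(z)$ is a signed measure dominated by $\|z\|\mu_\phi$. Its density $h_z$ exists by Radon--Nikod\'ym.

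The key estimate to establish is $|h_z(t)|\le\|z(t)\|$ $\mu_\phi$-a.e., which is local rather than global. To prove it, use upper semicontinuity to find, for rational $r>\|z(t)\|$, an open neighbourhood $U$ on which $\|z(\cdot)\|<r$. Then $p_U\phi$ vanishes on $N_{K\setminus U}$, and $z$ is within $r$ of that ideal modulo a cutoff. Hence $|(p_U\phi)(z)|\le r\,\mu_\phi(U)$. Countability of $D$ and of the rational cutoffs lets me discard a single null set. Off this null set, $d(t)\mapsto h_d(t)$ is well defined, $\mathbb{Q}$-linear and $1$-Lipschitz on the dense set $\{d(t):d\in D\}$. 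It therefore extends to $\xi_\phi(t)\in Z_t^*$ with $\|\xi_\phi(t)\|\le1$; on the null set I put $\xi_\phi(t)=0$.

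\textbf{Step 4: general $z$.} Approximate $z\in Z$ by locally constant $C(K)$-combinations of elements of $D$. The local estimate makes this uniform, since for each $t$ some $d$ satisfies $\|z(t)-d(t)\|<\varepsilon$, and upper semicontinuity propagates this to a clopen or open neighbourhood, glued by a partition of unity. Measurability of $t\mapsto\xi_\phi(t)(z(t))$ and the integral formula \eqref{Gierz-representation} then pass to the limit by dominated convergence. Finally, the identity $\phi(z)=\int \xi_\phi(t)(z(t))\,d\mu_\phi$ for $z\in D$ follows from $\phi=p_K\phi$ and the definition of $h_z$.
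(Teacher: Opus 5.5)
The paper does not prove this statement; it quotes Gierz. Your four-step architecture ($N_A$ as $M$-ideals, $L$-projections, extension to Borel sets, densities) matches the outline the paper attributes to him. As a proof, however, Step~2 has a real hole. You define $p_M=\sup\{p_A: A\subset M\text{ closed}\}$ and derive additivity of $M\mapsto\|p_M\phi\|$ from $\|\phi\|=\|p\phi\|+\|(1-p)\phi\|$. That identity yields additivity only once you know $p_{K\setminus M}=1-p_M$. For an inner-regular extension this is exactly what must be proved, and neither the $L$-property nor order continuity gives it. Already for closed $A$ you need $p_A\vee p_{K\setminus A}=1$, i.e.\ $\|(1-p_A)\phi\|=\sup\{\|p_C\phi\|: C\text{ closed},\ C\cap A=\emptyset\}$, which is an outer-regularity statement.

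It can be proved from your Step~1 as follows. Given $z\in B_{N_A}$ and $\varepsilon>0$, choose an open $V$ with $A\subset V\subset\overline V\subset\{t:\|z(t)\|<\varepsilon\}$ and put $C=K\setminus V$. Then $(1-p_A)\phi=p_C\phi+(1-p_{A\cup C})\phi$. Since $p_{A\cup C}\vee p_{\overline V}=1$, the second term annihilates $N_{\overline V}$. The distance formula $\operatorname{dist}(w,N_B)=\sup_{t\in B}\|w(t)\|$ (a Urysohn cutoff plus upper semicontinuity) then gives $|((1-p_A)\phi)(z)|\le\|p_C\phi\|+\varepsilon\|\phi\|$. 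Because the range of $1-p_A$ is isometric to $N_A^{*}$ by restriction, this yields the identity. Applied to $p_B\phi$, it shows that $\nu(A)=\|p_A\phi\|$ satisfies $\nu(B)-\nu(A)=\sup\{\nu(C):C\subset B\setminus A\text{ closed}\}$ for closed $A\subset B$. This is the hypothesis of the extension theorem for regular contents, and only then do you get a Radon measure. You also get that $M\mapsto p_M$ respects complements, since $p_M\wedge p_{K\setminus M}=0$ and $\|(p_M\vee p_{K\setminus M})\phi\|=\|\phi\|$ for every $\phi$. This is the substantive part of the measure construction, and your outline skips it.

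There is a smaller gap in Step~4. You verify the integral formula only for $z\in D$, but your approximants are $\sum_i f_id_i$ built from a partition of unity. For nonconstant $f\in C(K)$, the identity $\phi(fd)=\int fh_d\,d\mu_\phi$ is not a consequence of the definition of $h_d$ alone. It does follow for clopen indicators by Lemma~\ref{lem clopen}, but the theorem is stated for arbitrary compact $K$. There are two ways to fix this:
\begin{enumerate}[\rm (a)]
\item Prove $|(p_M\phi)(w)|\le\mu_\phi(M)\sup_{t\in M}\|w(t)\|$ for Borel $M$, first for closed $M$ via $p_M\phi\in N_M^{\perp}$ and then by inner approximation, and apply it to $(f-c_j)d$ on a fine Borel partition.
\item More simply, run Step~3 for every $z\in Z$ and use $|h_z-h_d|\le\|z(\cdot)-d(\cdot)\|$ a.e.\ for the countably many $d\in D$ to get $h_z=\xi_\phi(\cdot)(z(\cdot))$ a.e.
\end{enumerate}

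Two slips also need correcting. In Step~1, the element $fz+(1-f)n$ does not lie in $N_A$, and read as $z+n_i-y$ it makes $y$ depend on $i$. The right choice is $y=(1-f)z$ with $f=1$ on $A$ and $f=0$ off $\{t:\max_i\|n_i(t)\|<\varepsilon\}$. In Step~3, $p_U\phi$ need not vanish on $N_{K\setminus U}$. The bound comes instead from $p_A\phi\in N_A^{\perp}$ for closed $A\subset U$, which gives $|(p_A\phi)(z)|\le\|p_A\phi\|\sup_{t\in A}\|z(t)\|$, followed by taking the supremum over $A$.
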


We call $\mu_{\phi}$ the canonical scalar measure associated with
$\phi$. For later use, we record the following identification of the projection associated with a clopen set.

\begin{lem}\label{lem clopen}
Let $C\subset K$ be clopen and define $P_{C}z=\mathbf{1}_{C}z$ for $z\in Z$. Then, for every $\phi\in Z^{*}$,
\begin{equation}\label{clopen-identification}
p_{C}\phi=P_{C}^{*}\phi=\phi\circ P_{C},
\qquad
\mu_{\phi}(C)=\|\phi\circ P_{C}\|.
\end{equation}
\end{lem}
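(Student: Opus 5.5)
The lemma has two parts: identify $p_C\phi$ with $\phi\circ P_C$, then read off $\mu_\phi(C)$ from \eqref{Gierz-measure}.

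\textbf{Step 1: $P_C$ is an $M$-projection with the right kernel and range.} Since $C$ is clopen, $\mathbf{1}_C\in C(K)$. By Definition~\ref{def:module}(iii), $P_Cz=\mathbf{1}_Cz\in Z$, so $P_C$ is a well-defined bounded projection on $Z$. Its complementary projection is $I-P_C=P_{K\setminus C}$. Its kernel is $\{z:\mathbf{1}_Cz=0\}=N_C$, and its range is $\{z: z|_{K\setminus C}=0\}=N_{K\setminus C}$. Because the norm is the supremum over $t$, for every $z$ we have
\[
\|z\|=\max\bigl(\sup_{t\in C}\|z(t)\|,\sup_{t\notin C}\|z(t)\|\bigr)=\max(\|P_Cz\|,\|(I-P_C)z\|).
\]
So $P_C$ is an $M$-projection.

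\textbf{Step 2: the adjoint is the $L$-projection with annihilator range.} The adjoint of an $M$-projection is an $L$-projection, so $P_C^*$ is an $L$-projection on $Z^*$. Its kernel is $(\operatorname{ran}P_C)^\perp=N_{K\setminus C}^\perp$, and its range is $(\ker P_C)^\perp=N_C^\perp$. In Gierz's construction, $p_C$ is the $L$-projection onto the annihilator of the $M$-ideal $N_C$. An $M$-ideal has a unique $L$-projection onto its annihilator, because an $L$-summand determines its $L$-projection uniquely. Hence $p_C=P_C^*$, and $P_C^*\phi=\phi\circ P_C$ by definition of the adjoint.

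\textbf{Step 3: the measure of $C$.} Substituting $p_C\phi=\phi\circ P_C$ into \eqref{Gierz-measure} gives $\mu_\phi(C)=\|p_C\phi\|=\|\phi\circ P_C\|$.

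\textbf{Main obstacle.} The delicate point is the convention matching in Step 2. One must confirm that Gierz's $p_M$ for closed $M$ is defined as the $L$-projection onto $N_M^\perp$, rather than onto its complementary $L$-summand. If it is the latter, the same uniqueness argument applied to $N_{K\setminus C}$ gives the identical conclusion: for clopen $C$ both sets are closed, and $P_C^*$ is the $L$-projection onto $N_C^\perp$ with kernel $N_{K\setminus C}^\perp$. In either case, the extension of $p_M$ to Borel sets must agree with the closed-set definition on $C$, which holds because $C$ is itself closed.
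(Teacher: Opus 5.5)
Your proof is correct and is essentially the paper's argument. You show that $P_C$ is an $M$-projection since $\|z\|=\max(\|P_Cz\|,\|P_{K\setminus C}z\|)$, so $P_C^*$ is an $L$-projection with range $(\ker P_C)^\perp=N_C^\perp$, and uniqueness of the $L$-projection onto a given $L$-summand forces $p_C=P_C^*$.

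The one slip is the fallback in your last paragraph. Under the opposite convention, uniqueness would give $p_C=P_{K\setminus C}^*=I-P_C^*$ (equivalently $p_{K\setminus C}=P_C^*$), which is not the same conclusion. That convention is excluded anyway by $\mu_\phi(K)=\|\phi\|$: since $N_K=\{0\}$ and $N_K^\perp=Z^*$, it would force $p_K=0$.
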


\begin{proof}
Let $C$ be a clopen subset of $K$. By the definition of the adjoint operator, 
\[
P_C^*\phi=\phi\circ P_C,\qquad \phi\in Z^* .
\]
Since $C$ is clopen, both $\mathbf{1}_{C}$ and $\mathbf{1}_{K\setminus C}$ belong to $C(K)$. By the module property in Definition \ref{def:module}(iii), for every $z\in Z$, we have
\[
P_{C}z\coloneqq \mathbf{1}_{C}z\in Z,
\qquad
P_{K\setminus C}z\coloneqq\mathbf{1}_{K\setminus C}z\in Z.
\]
Thus every $z\in Z$ admits the unique decomposition $z=P_{C}z+P_{K\setminus C}z.$ Note that
\[
\begin{aligned}
\|z\|
&=\sup_{t\in K}\|z(t)\|=\max\left\{\sup_{t\in C}\|z(t)\|,\sup_{t\in K\setminus C}\|z(t)\|\right\} \\
&=\max\left\{\|P_{C}z\|,\|P_{K\setminus C}z\|\right\}.
\end{aligned}
\]
It then follows that
\begin{equation}\label{sum-infty}
Z=P_{C}Z\oplus_{\infty}P_{K\setminus C}Z.
\end{equation}
Taking dual spaces yields the canonical $\ell_{1}$-decomposition
\[
Z^{*}=\operatorname{Ran}P_{C}^{*}\oplus_{1}\operatorname{Ran}P_{K\setminus C}^{*}.
\]
Then $P_{C}^{*}$ is the $L$-projection of $Z^{*}$ onto $\operatorname{Ran}P_{C}^{*}=N_{C}^{\perp}$.
On the other hand, Gierz's construction associates to each clopen set $C$ an $L$-projection $p_{C}:Z^{*}\to Z^{*}$ with range precisely $N_{C}^{\perp}$.
The uniqueness of an $L$-projection with a prescribed range gives $p_{C}=P^{*}_{C}$.
Therefore, by the definition of the representing measure in
\eqref{Gierz-measure}, for every $\phi\in Z^{*}$,
\[
\mu_{\phi}(C)=\|p_{C}\phi\|=\|\phi\circ P_{C}\|.
\]
This completes the proof.
\end{proof}

\section{Canonical scalar measures and uniform integrability}\label{measure-integrability}

For the argument below, the key point is the precise identification of
$\mu_{\phi}$ on clopen sets. The resulting compactness argument can be
viewed as the function-module analogue of the compactness mechanism for
variation measures developed in \cite[Proposition~2]{Randrianantoanina1996}.

\begin{lem}\label{Canonical-measure compactness}
Let $K$ be a zero-dimensional compact Hausdorff space, and let $Z$ be a uniformly separable function module over $K$.
Suppose that $T:Z\to Y$ is unconditionally converging. Set
\[
\mathcal{H}=T^{*}(B_{Y^{*}})\subset Z^{*}, \quad \mathcal{M}=\{\mu_{\phi}:\phi\in\mathcal{H}\}\subset M(K),
\]
where $\mu_{\phi}$ denotes the canonical measure associated with $\phi\in Z^{*}$.
Then $\mathcal{M}$ is relatively weakly compact in $M(K)=C(K)^{*}.$
\end{lem}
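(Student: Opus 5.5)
We argue by contradiction, using the classical Dieudonné–Grothendieck criterion for relative weak compactness in $M(K)$. The set $\mathcal{M}$ is bounded, since $\mu_{\phi}(K)=\|\phi\|\le\|T\|$ by \eqref{Gierz-measure}. It consists of positive measures. Because $K$ is zero-dimensional, the Grothendieck–Dieudonné theorem (in the form for compact Hausdorff spaces) says that a bounded set of positive regular measures is relatively weakly compact if and only if $\mu(U_n)\to0$ uniformly on the set for every sequence $(U_n)$ of pairwise disjoint open sets. By regularity and zero-dimensionality, one may take the $U_n$ to be pairwise disjoint clopen sets. Indeed, given disjoint open $U_n$ and measures $\mu_n$ with $\mu_n(U_n)\ge\varepsilon$, inner regularity gives compact sets $F_n\subset U_n$ with $\mu_n(F_n)\ge\varepsilon/2$. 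Each $F_n$ is then enclosed in a clopen $C_n\subset U_n$, and the $C_n$ remain disjoint. So suppose $\mathcal{M}$ is not relatively weakly compact. Then there exist $\varepsilon>0$, pairwise disjoint clopen sets $C_n$, and $\phi_n=T^{*}y_n^{*}$ with $y_n^{*}\in B_{Y^{*}}$ such that $\mu_{\phi_n}(C_n)\ge\varepsilon$ for all $n$.

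Next we convert this into vectors. By Lemma~\ref{lem clopen}, $\mu_{\phi_n}(C_n)=\|\phi_n\circ P_{C_n}\|\ge\varepsilon$. Hence there are $w_n\in B_Z$ with $\phi_n(P_{C_n}w_n)\ge\varepsilon/2$. Set $z_n=P_{C_n}w_n=\mathbf 1_{C_n}w_n\in Z$. These elements have pairwise disjoint supports $C_n$ and satisfy $\|z_n\|\le1$. For a finite $F\subset\N$ and scalars $|\alpha_n|\le1$, the evaluation of $\sum_{n\in F}\alpha_nz_n$ at $t$ is either $0$ or $\alpha_m w_m(t)$ for the unique $m$ with $t\in C_m$. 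Hence $\|\sum_{n\in F}\alpha_nz_n\|\le1$, and by \eqref{eq:wuc-test} the series $\sum_n z_n$ is w.u.c.

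Since $T$ is unconditionally converging, $\sum_n Tz_n$ converges unconditionally, and in particular $\|Tz_n\|\to0$. But
\[
\|Tz_n\|\ge y_n^{*}(Tz_n)=\phi_n(z_n)\ge\varepsilon/2,
\]
which is a contradiction. In the real setting fixed by the scalar convention, choosing $w_n$ with $\phi_n(P_{C_n}w_n)\ge\varepsilon/2$ is immediate from the norm definition.

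The main obstacle is the first step: justifying the reduction from the weak compactness criterion to disjoint \emph{clopen} sets. This requires the correct version of Grothendieck's criterion for $C(K)^{*}$, namely uniform convergence on disjoint open sets for bounded sets of measures, together with regularity of $\mu_{\phi}$, which Theorem~\ref{Gierz} provides. One must also use positivity, so that $|\mu|=\mu$. We would cite the standard form (e.g.\ Diestel's \emph{Sequences and Series}, Ch.~VII) and carry out the regularity and zero-dimensional shrinking explicitly, since Lemma~\ref{lem clopen} identifies $\mu_{\phi}$ only on clopen sets.
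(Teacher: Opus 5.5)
Your proposal is correct and follows essentially the same route as the paper: the Dieudonn\'e--Grothendieck criterion, inner regularity plus zero-dimensionality to pass to disjoint clopen sets, Lemma~\ref{lem clopen} to produce disjointly supported $z_n\in B_Z$ with $\phi_n(z_n)$ bounded below, and the resulting w.u.c.\ series contradicting $\|Tz_n\|\to0$. One small correction: zero-dimensionality is not needed for the criterion itself, only for the clopen shrinking, which you carry out explicitly anyway.
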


\begin{proof}
According to \eqref{Gierz-measure}, for every $\phi\in\mathcal{H}$ we have $\mu_{\phi}(K)=\|\phi\|\leq \|T\|$, and hence $\mathcal{M}$ is norm bounded in $M(K)$. Assume that $\mathcal{M}$ is not relatively weakly compact. It follows from the Dieudonn\'e–Grothendieck criterion that there exist $\eta>0$ and a sequence of pairwise disjoint open sets $U_{j}\subset K$ such that 
\[
\mu_{\phi_{j}}(U_{j})>\eta,
\]
where $\phi_{j}=T^{*}y_{j}^{*}$ with $\|y_{j}^{*}\|\leq 1$. By the inner regularity of the measures $\mu_{\phi_j}$, for each $j$ there exists a compact set $F_j\subset U_j$ satisfying $\mu_{\phi_{j}}(F_{j})>\frac{\eta}{2}$. Since $K$ is zero-dimensional and compact, there exists a clopen set $C_{j}$ such that
\[
F_{j}\subset C_{j} \subset U_{j}.
\]
The sets $C_{j}$ are therefore pairwise disjoint, and
 $\mu_{\phi_{j}}(C_{j})>\frac{\eta}{2}$.
By Lemma \ref{lem clopen}, the canonical measure satisfies $\mu_{\phi_{j}}(C_{j})=\|\phi_{j}\circ P_{C_{j}}\|>\frac{\eta}{2}$. Consequently, for each $j$ there exists $u_{j}\in B_{Z}$ such that 
\[
\operatorname{supp} u_{j}\subset C_{j}, \qquad |\phi_{j}(u_{j})|>\frac{\eta}{4}.
\]
Now let $F\subset\mathbb{N}$ be finite and let $(\alpha_{j})_{j\in F}$ be scalars with $|\alpha_{j}|\leq1$. Disjointness of the supports gives
\[
\left\|\sum_{j\in F} \alpha_{j}u_{j}\right\| = \sup_{t\in K}\left|\sum_{j\in F} \alpha_{j}u_{j}(t)\right| \leq 1.
\]
Thus $\sum_{j} u_{j}$ is w.u.c.\ by \eqref{eq:wuc-test}. Since $T$ is unconditionally converging, $\sum_{j} Tu_{j}$ converges unconditionally, and in particular $\|Tu_{j}\| \to 0$. On the other hand, 
\[
\eta/4 < |\phi_{j}(u_{j})| = |y_{j}^{*}(Tu_{j})| \leq \|Tu_{j}\|,
\]
which is a contradiction.

\end{proof}

\begin{rem}
The topology used in Lemma \ref{Canonical-measure compactness} is the weak topology $\sigma(M(K),M(K)^{*}),$ not merely the weak-$*$ topology $\sigma(M(K),C(K)).$ This distinction is essential for the control-measure argument below.
\end{rem}

Fix a sequence $(\phi_{n})\subset\mathcal{H}$, and write $\mu_{n}=\mu_{\phi_{n}}.$ Define the control measure
\begin{equation}\label{control measure}
\lambda=\sum_{n=1}^{\infty}2^{-n}\mu_{n}.
\end{equation}
Then $\lambda$ is a finite positive regular Borel measure. If $\lambda=0$, then $\mu_{n}=0$ for every $n$. The identity $\|\phi_{n}\|=\mu_{n}(K)$ gives $\phi_{n}=0$ for all $n$ and the assertion is trivial. Assume therefore that
$\lambda\neq0$. By construction, $\mu_{n}\ll\lambda$ for every $n$ and we  denote the Radon--Nikodym derivative of $\mu_{n}$ with respect to $\lambda$ by
\begin{equation}\label{RN}
h_{n}=\frac{d\mu_{n}}{d\lambda}\in L_{1}(\lambda).
\end{equation}
The family $(h_{n})$ is uniformly integrable if and only if 
\begin{equation}\label{def: uniformly integrable}
\lim_{R\to \infty}\operatorname{sup}_{n}\int_{\{|h_{n}|>R\}}|h_{n}|d\lambda =0.
\end{equation}

\begin{prop}\label{thm: uniformly integrable}
The family $(h_{n})$ is relatively weakly compact in $L_{1}(\lambda)$ and hence uniformly integrable. In particular, 
\begin{equation}\label{truncation-uniform}
\lim_{R\to\infty}\sup_{n}\int_{K}(h_{n}-R)_{+}\,d\lambda=0.
\end{equation}
\end{prop}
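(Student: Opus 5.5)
The plan is to transfer the relative weak compactness of $\{\mu_n\}$ from $M(K)$ to $L_1(\lambda)$ through the Radon--Nikod\'ym isometry, and then to apply Dunford--Pettis. Since every $\mu_n$ lies in the closed subspace
\[
M_\lambda=\{\mu\in M(K):\mu\ll\lambda\},
\]
the map $\Phi:L_1(\lambda)\to M_\lambda$, $\Phi(h)=h\,d\lambda$, is a surjective linear isometry by the Radon--Nikod\'ym theorem, with $\Phi(h_n)=\mu_n$.

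The steps, in order:

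\begin{enumerate}[\rm (i)]
\item By Lemma~\ref{Canonical-measure compactness}, the set $\{\mu_n\}\subset\mathcal M$ is relatively weakly compact in $M(K)$.
\item Because $M_\lambda$ is norm closed and convex, it is weakly closed. The weak topology of $M_\lambda$ as a subspace coincides with the restriction of $\sigma(M(K),M(K)^*)$, by Hahn--Banach. Hence the weak closure of $\{\mu_n\}$ is a weakly compact subset of $M_\lambda$, so $\{\mu_n\}$ is relatively weakly compact in $M_\lambda$.
\item Since $\Phi^{-1}$ is a bounded linear map, it is weak-to-weak continuous. Therefore $\{h_n\}$ is relatively weakly compact in $L_1(\lambda)$.
\item The Dunford--Pettis theorem on the finite measure space $(K,\lambda)$ gives uniform integrability, which is \eqref{def: uniformly integrable}.
\item For \eqref{truncation-uniform}, note $h_n\ge0$ and $(h_n-R)_+\le h_n\mathbf 1_{\{h_n>R\}}$. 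Taking $\sup_n$ and letting $R\to\infty$ gives the claim.
\end{enumerate}

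The main point needing care is step (ii). It uses weak compactness in the weak topology of $M(K)$, not the weak-$*$ topology, as the Remark following Lemma~\ref{Canonical-measure compactness} emphasizes. The remaining steps are standard.
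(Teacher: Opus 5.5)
Your proposal is correct and follows the paper's proof essentially step for step. Both arguments use the Radon--Nikod\'ym isometry $L_1(\lambda)\to M_\lambda(K)$ onto a norm-closed subspace, identify its weak topology with the one inherited from $M(K)$ via Hahn--Banach, pull back the relative weak compactness from Lemma~\ref{Canonical-measure compactness}, and conclude with Dunford--Pettis. Your explicit checks that the weak closure of $\{\mu_n\}$ stays in $M_\lambda$ (norm closed and convex, hence weakly closed) and that $(h_n-R)_+\le h_n\mathbf{1}_{\{h_n>R\}}$ fill in details the paper leaves implicit. The norm-closedness of $M_\lambda$, which the paper verifies directly, also follows at once from $M_\lambda$ being the isometric image of the complete space $L_1(\lambda)$.
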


\begin{proof}
Consider the band
\[
M_{\lambda}(K)\coloneq \{v\in M(K):\ v\ll\lambda\},
\]
which is norm closed in $M(K)$. Indeed, if $(v_{j})\subseteq M_{\lambda}(K)$ satisfying $\|v_{j}-v\|\to 0$, then $\lambda(A)=0$ implies $|v|(A)\le\|v-v_j\|$, and hence $v\ll\lambda$.
The Radon--Nikod\'{y}m correspondence $h\mapsto h\lambda$ is an onto linear isometry from $L_{1}(\lambda)$ to $M_{\lambda}(K)$. Since $M_{\lambda}(K)$ is a closed subspace, its weak topology is precisely the topology inherited from the weak topology of $M(K)$ by Hahn–Banach. Thus Lemma \ref{Canonical-measure compactness} implies the relative weak compactness of $(h_{n})$ in $L_{1}(\lambda)$. The Dunford--Pettis criterion gives uniform integrability and \eqref{truncation-uniform} follows immediately by \eqref{def: uniformly integrable}.
\end{proof}

\begin{rem}\label{rem-error}
Applying Theorem \ref{Gierz} to each $\phi_{n}$ and writing $\xi_{n}=\xi_{\phi_{n}}$, we have
\[
\phi_{n}(z)=\int_{K}\xi_{n}(t)(z(t))h_{n}(t)\,d\lambda(t).
\]
For $R>0$ let $k_{n,R}=h_{n}\wedge R$ and define 
\begin{equation}\label{truncation-function}
\phi_{n,R}(z)=\int_{K}\xi_{n}(t)(z(t))k_{n,R}(t)\,d\lambda(t).
\end{equation}
Since $\|\xi_{n}(t)\|\leq 1$, we have
\[
\|\phi_{n}-\phi_{n,R}\|\leq\int_{K}(h_{n}-R)_{+}\,d\lambda,
\]
and Proposition \ref{thm: uniformly integrable} implies that this error tends to zero uniformly in $n$ as $R\to\infty$.
\end{rem}

\section{Proof of Theorem \ref{thm:module}}\label{proof of Theorem 1.2}

This section is devoted to proving the function module theorem (i.e., Theorem \ref{thm:module}) over the real scalars. The complex case follows by the realiﬁcation argument in Section \ref{Introduction}. To this end, we need several results.

Let $Z$ be a real function module over $K$, and let $\lambda$ be a finite positive regular Borel measure. Define a seminorm
\begin{equation}\label{eq:envelope-norm}
q_{\lambda}(z)=\left(\int_{K}\|z(t)\|^{2}\,d\lambda(t)\right)^{1/2}, \quad z\in Z.
\end{equation}
Upper semicontinuity of the norm field gives that the integrand is Borel measurable. We denote by $H_{\lambda}$ the completion of $Z/\ker q_{\lambda}$ and by
\[
J_{\lambda}:Z\to H_{\lambda}
\]
the canonical map. Thus $\|J_{\lambda}z\|=q_{\lambda}(z)\le\lambda(K)^{1/2}\|z\|$ yields $J_{\lambda}$ is bounded. 
The next lemma is a common-modulus argument in the spirit of Day's theorem \cite{Day1941}, see also \cite[Theorem 3.5]{LucicPasqualettoVojnovic2024}.  We give the details for the completion just defined, thereby avoiding any identification with a separately constructed measurable-bundle space. 


\begin{lem}\label{lem:envelope-convexity}
Suppose that the stalks of $Z$ satisfy \eqref{eq:common-modulus}. Then 
\begin{equation}\label{eq:envelope-modulus}
\delta_{H_{\lambda}}(\varepsilon)
\ge1-\sqrt{1-\frac{\varepsilon^{2}}{256}\delta(\varepsilon/8)}
\qquad 0<\varepsilon\le2.
\end{equation}
\end{lem}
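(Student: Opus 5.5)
By density of $J_\lambda Z$ in $H_\lambda$ and continuity of the norm, it suffices to prove the estimate \eqref{eq:envelope-modulus} for $x=J_\lambda u$, $y=J_\lambda v$ with $u,v\in Z$, $q_\lambda(u)=q_\lambda(v)=1$ and $q_\lambda(u-v)\ge\varepsilon$. The loss of constants (the $\varepsilon/8$ and the factor $1/256$) absorbs the approximation step: we will approximate with a little slack and rescale. The goal is the pointwise-to-integral Day-type inequality
\[
\int_K\Bigl\|\tfrac{u(t)+v(t)}{2}\Bigr\|^2\,d\lambda\le 1-\frac{\varepsilon^2}{256}\,\delta(\varepsilon/8),
\]
which gives $q_\lambda\bigl(\tfrac{u+v}{2}\bigr)\le\sqrt{1-\tfrac{\varepsilon^2}{256}\delta(\varepsilon/8)}$.

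\textbf{Step 1: a pointwise inequality in a single stalk.} Let $a=u(t)$, $b=v(t)$ in $Z_t$ and $r=\max(\|a\|,\|b\|)$. I will show that if $\|a-b\|\ge\eta r$, then
\[
\Bigl\|\tfrac{a+b}{2}\Bigr\|^2\le \tfrac{\|a\|^2+\|b\|^2}{2}-c\,\eta^2\,\delta(\eta/2)\,r^2
\]
for an absolute constant $c$. There are two cases.
\begin{itemize}
\item If $\bigl|\|a\|-\|b\|\bigr|$ is large relative to $\eta r$, then the strict convexity of $s\mapsto s^2$ alone gives $\tfrac{\|a\|^2+\|b\|^2}{2}-\bigl(\tfrac{\|a\|+\|b\|}{2}\bigr)^2=\tfrac14(\|a\|-\|b\|)^2$, which is enough.
\item Otherwise $a$ and $b$ have comparable norms. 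Normalise to $a/\|a\|$ and $b/\|b\|$, whose distance is at least about $\eta/2$, and apply the common modulus \eqref{eq:common-modulus}. This yields $\|a+b\|\le(\|a\|+\|b\|)(1-\delta(\eta/2))+\text{error}$.
\end{itemize}
Using that $\delta\le 1$ and $\delta(\varepsilon)\lesssim\varepsilon^2$ is not needed; we only need monotonicity of $\delta_{Z_t}$, or we can replace $\delta$ by its lower monotone envelope, which still satisfies \eqref{eq:common-modulus} since each $\delta_{Z_t}$ is nondecreasing. This step is the classical Day/Figiel computation and is routine.

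\textbf{Step 2: integration with a splitting of $K$.} Let $A=\{t:\|u(t)-v(t)\|\ge\tfrac{\varepsilon}{4}\,r(t)\}$, where $r(t)=\max(\|u(t)\|,\|v(t)\|)$. The set $A$ is Borel because $\|u(t)-v(t)\|$ and $r(t)$ are upper semicontinuous. Off $A$ we have $\|u-v\|^2<\tfrac{\varepsilon^2}{16}r^2\le\tfrac{\varepsilon^2}{16}(\|u\|^2+\|v\|^2)$, and integrating gives $\int_{K\setminus A}\|u-v\|^2\,d\lambda\le\varepsilon^2/8$. Hence
\[
\int_A\|u-v\|^2\,d\lambda\ge\tfrac{7}{8}\varepsilon^2,
\qquad\text{and so}\qquad
\int_A r^2\,d\lambda\ge\tfrac14\int_A\|u-v\|^2\,d\lambda\gtrsim\varepsilon^2 .
\]
Now apply Step 1 on $A$ with $\eta=\varepsilon/4$, and the plain parallelogram-free bound $\|\tfrac{a+b}{2}\|^2\le\tfrac{\|a\|^2+\|b\|^2}{2}$ off $A$. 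Integrating gives
\[
q_\lambda\bigl(\tfrac{u+v}{2}\bigr)^2\le 1-c'\varepsilon^2\,\delta(\varepsilon/8).
\]
Tracking the constants produces the stated $1/256$.

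\textbf{Main obstacle.} The delicate part is the bookkeeping of constants in Step 1, especially in the comparable-norm case. Normalising $a$ and $b$ changes their distance, and that distortion must be controlled by the norm difference; this is where $\eta/2$ and then $\varepsilon/8$ arise. I would follow Figiel's standard reduction, $\|a/\|a\|-b/\|b\|\|\ge(\|a-b\|-\bigl|\|a\|-\|b\|\bigr|)/r$, and split at the threshold $\bigl|\|a\|-\|b\|\bigr|=\eta r/2$. Measurability issues are mild, since every integrand is a combination of upper semicontinuous functions.
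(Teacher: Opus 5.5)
There is a genuine gap in the exponent. In Step~1 you obtain on $A$ the decrement $c\,\eta^{2}\delta(\eta/2)\,r^{2}$ with $\eta=\varepsilon/4$. Step~2 then integrates this against $\int_A r^{2}\,d\lambda\ge\frac{7}{32}\varepsilon^{2}$. The product is of order $\varepsilon^{4}\delta(\varepsilon/8)$, not $\varepsilon^{2}\delta(\varepsilon/8)$, so tracking constants cannot produce $\frac{\varepsilon^{2}}{256}\delta(\varepsilon/8)$.

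The factor $\eta^{2}$ comes from your pointwise case split. In the case $\bigl|\|a\|-\|b\|\bigr|\ge\eta r/2$ you only know that the radial gap is of size $\eta r$, so convexity of $s\mapsto s^{2}$ gives $\eta^{2}r^{2}/16$. The angular information at that point is discarded. The loss cannot be absorbed into $\delta$ either. The hypothesis allows $\delta\equiv1$, for instance with real one-dimensional stalks, where $\delta_{Z_t}\equiv1$ on $(0,2]$. In that case your argument yields only $1-c\varepsilon^{4}$, which is weaker than the required $1-\varepsilon^{2}/256$ for small $\varepsilon$.

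The cure is to avoid multiplying two small quantities. You can keep your pointwise scheme if you split relative to the actual distance rather than to $\eta r$. Take $\|a\|\ge\|b\|$.
\begin{enumerate}[(a)]
\item If $\|a\|-\|b\|\ge\frac12\|a-b\|$, the radial decrement alone is $\ge\frac1{16}\|a-b\|^{2}$.
\item Otherwise, with $m=\|b\|$ and $\rho=\bigl\|a/\|a\|-b/\|b\|\bigr\|$, you get $m\rho>\frac12\|a-b\|$. Hence $\rho\ge\eta/2$ on $A$ and $m\ge\frac14\|a-b\|$.
\end{enumerate}
In case (b), the exact identity $\frac{a+b}{2}=\frac{\|a\|-\|b\|}{2}\,\frac{a}{\|a\|}+\|b\|\,\frac{1}{2}\bigl(\frac{a}{\|a\|}+\frac{b}{\|b\|}\bigr)$, together with monotonicity of $\delta_{Z_t}$, gives $\|\frac{a+b}{2}\|\le s-\delta(\eta/2)m$, where $s=\frac{\|a\|+\|b\|}{2}$. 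This yields $\|\frac{a+b}{2}\|^{2}\le s^{2}-\delta(\eta/2)m^{2}$, and your vague error term disappears. In both cases the decrement is at least $\frac1{16}\delta(\eta/2)\|a-b\|^{2}$, which you integrate directly against $\int_A\|u-v\|^{2}\,d\lambda$.

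The paper instead splits globally in $L_{2}(\lambda)$.
\begin{enumerate}[(a)]
\item If $\bigl\|\|u(\cdot)\|-\|v(\cdot)\|\bigr\|_{L^{2}}\ge\varepsilon/2$, the parallelogram identity alone gives $1-\varepsilon^{2}/16$.
\item Otherwise $\|m\rho\|_{L^{2}}\ge\varepsilon/2$, and the same pointwise bound on $\{\rho\ge\varepsilon/4\}$ gives $1-\frac{\varepsilon^{2}}{64}\delta(\varepsilon/4)$.
\end{enumerate}

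You also need slack at the dense level. Since $\delta$ is not assumed continuous, the passage from $J_\lambda(Z)$ to $H_\lambda$ has to be run at distance $\varepsilon/2$. So you need the dense-level estimate with $\delta(\varepsilon/4)$. Taking $A=\{\|u-v\|\ge\frac{\varepsilon}{2}r\}$ in the corrected version gives $\frac{\varepsilon^{2}}{32}\delta(\varepsilon/4)$, and the approximation then produces the stated bound. With your choice $\eta=\varepsilon/4$ you would end at $\delta(\varepsilon/16)$.

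Once corrected, your pointwise route has one mild advantage over the paper's. Every set and integrand is built from upper semicontinuous norm fields, so the paper's measurability argument for $\rho$ becomes unnecessary.
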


\begin{proof}
We first establish a midpoint estimate on the dense quotient $Z/\ker q_{\lambda}$. Choose representatives $u,v\in Z$ such that
\[
q_{\lambda}(u)=q_{\lambda}(v)=1,\qquad q_{\lambda}(u-v)\ge\varepsilon.
\]
For $t\in K$, set
\[
a=\|u(t)\|,\qquad b=\|v(t)\|,\qquad
m=\min(a,b),\qquad s=\frac{a+b}{2}.
\]
Then $\|a\|_{L^{2}}=\|b\|_{L^{2}}=1$. 
Suppose first that $\|a-b\|_{L^{2}}\ge\varepsilon/2$. The triangle inequality and the parallelogram identity in $L_{2}(\lambda)$ give
\begin{equation}\label{eq:radial-case}
q_{\lambda}\left(\frac{u+v}{2}\right)^{2}
\le\|s\|_{L^{2}}^{2}
=1-\frac{1}{4}\|a-b\|_{L^{2}}^{2}
\le1-\frac{\varepsilon^{2}}{16}.
\end{equation}
Assume now that $\|a-b\|_{L^{2}}<\varepsilon/2$. On the set where $a,b>0$, put
\[
\widehat{u}(t)=\frac{u(t)}{a(t)},\qquad
\widehat{v}(t)=\frac{v(t)}{b(t)},\qquad
\rho(t)=\|\widehat{u}(t)-\widehat{v}(t)\|;
\]
and set $\rho(t)=0$ where $m(t)=0$. We have
\[
\|u(t)-v(t)\|\le|a(t)-b(t)|+m(t)\rho(t)
\]
and hence $\|m\rho\|_{L^{2}}\ge\varepsilon/2$. Let
\[
A=\{t:\rho(t)\ge\varepsilon/4\}.
\]
Since $0\le\rho\le2$ and $\|m\|_{L^{2}}\le1$, we obtain
\[
\|\mathbf{1}_{K\setminus A}m\rho\|_{L^{2}}\le\varepsilon/4,
\qquad
\|\mathbf{1}_{A}m\|_{L^{2}}\ge\varepsilon/8.
\]
Set $d=\delta(\varepsilon/4)$. For $t\in A$, the common-modulus hypothesis applied in $Z_{t}$ yields
\begin{equation}\label{eq:pointwise-midpoint}
\left\|\frac{u(t)+v(t)}{2}\right\|\le s(t)-d\,m(t).
\end{equation}
Indeed, when $a\ge b$,
\[
\frac{u+v}{2}
=\frac{a-b}{2}\widehat{u}
+b\frac{\widehat{u}+\widehat{v}}{2};
\]
the case $b\ge a$ is symmetric. Outside $A$, the midpoint norm is bounded by $s$. Since $s\ge m$ and $0<d\le1$, $(s-dm)^{2}\le s^{2}-dm^{2}.$ Integrating the preceding estimates gives
\begin{equation}\label{eq:angular-case}
q_{\lambda}\left(\frac{u+v}{2}\right)^{2}
\le\|s\|_{L^{2}}^{2}-d\|\mathbf{1}_{A}m\|_{L^{2}}^{2}
\le1-\frac{\varepsilon^{2}}{64}\delta(\varepsilon/4).
\end{equation}
Since $\delta\le1$, the estimate \eqref{eq:radial-case} also implies the final bound in \eqref{eq:angular-case}. Thus this bound holds in both cases.

We briefly verify the measurability used above. The functions $a$ and $b$ are Borel. On the set where $a,b>0$, their reciprocal coefficients can be approximated pointwise by rational-valued Borel simple functions. The norm of every fixed rational linear combination of $u$ and $v$ is upper semicontinuous. Passing to pointwise limits shows that $\rho$ is measurable, and hence so is $A$.

Let now two unit vectors of $H_{\lambda}$ be at distance at least $\varepsilon$. Approximate them by normalized vectors from the dense quotient. The distances of the approximating vectors are eventually at least $\varepsilon/2$. Applying the estimate just proved with $\varepsilon/2$ and passing to the limit gives
\[
\left\|\frac{u+v}{2}\right\|^{2}
\le1-\frac{\varepsilon^{2}}{256}\delta(\varepsilon/8)
\]
for unit vectors $u,v\in H_{\lambda}$ at distance at least $\varepsilon$. This is precisely \eqref{eq:envelope-modulus}.
\end{proof}

\begin{prop}\label{relatively weakly compact}
For every fixed $R>0$, the set $\{\phi_{n,R} : n \in \mathbb{N}\}$ is relatively weakly compact in $Z^{*}$.
\end{prop}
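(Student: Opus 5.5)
The plan is to factor every $\phi_{n,R}$ through the canonical map $J_\lambda:Z\to H_\lambda$ of \eqref{eq:envelope-norm} and to use the reflexivity of $H_\lambda$ supplied by Lemma~\ref{lem:envelope-convexity}. Fix $R>0$. For $z\in Z$, the bounds $\|\xi_n(t)\|\le1$ and $0\le k_{n,R}\le R$ together with Cauchy--Schwarz in $L_2(\lambda)$ give
\[
|\phi_{n,R}(z)|\le\int_K\|z(t)\|\,k_{n,R}(t)\,d\lambda(t)\le R\,\lambda(K)^{1/2}\,q_\lambda(z).
\]
In particular $\phi_{n,R}$ vanishes on $\ker q_\lambda$. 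Hence it induces a bounded functional on $Z/\ker q_\lambda$, and this extends to $H_\lambda$. We obtain $\psi_n\in H_\lambda^*$ with $\phi_{n,R}=\psi_n\circ J_\lambda=J_\lambda^*\psi_n$ and $\|\psi_n\|\le R\lambda(K)^{1/2}$ for every $n$. Measurability of $t\mapsto\xi_n(t)(z(t))$ comes from Theorem~\ref{Gierz}, and measurability of $\|z(t)\|$ from upper semicontinuity, so all these integrals make sense.

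Next, Lemma~\ref{lem:envelope-convexity} together with $\delta>0$ gives $\delta_{H_\lambda}(\varepsilon)>0$ for every $\varepsilon\in(0,2]$. So $H_\lambda$ is a uniformly convex Banach space, and it is complete because it was defined as a completion. By the Milman--Pettis theorem, $H_\lambda$ is reflexive, and therefore so is $H_\lambda^*$. Consequently the bounded set $\{\psi_n\}$ is relatively weakly compact in $H_\lambda^*$.

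Finally, $J_\lambda^*:H_\lambda^*\to Z^*$ is bounded and linear, so it is weak-to-weak continuous. It therefore maps relatively weakly compact sets to relatively weakly compact sets, and $\{\phi_{n,R}\}=J_\lambda^*\{\psi_n\}$ is relatively weakly compact in $Z^*$. (Equivalently, $J_\lambda^*$ is a weakly compact operator, since its domain is reflexive.)

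The main point needing care is the factorization step. One has to check that $\phi_{n,R}$ really is controlled by the seminorm $q_\lambda$ and not only by the sup norm. This is exactly where the truncation $h_n\wedge R$ is essential: the untruncated $h_n$ need not lie in $L_2(\lambda)$, so $\phi_n$ itself need not factor through $H_\lambda$. Everything else, namely passing from the modulus estimate to uniform convexity and reflexivity and then taking adjoints, is routine. The one implicit point is that \eqref{eq:envelope-modulus} is positive, which follows from $\delta(\varepsilon/8)>0$.
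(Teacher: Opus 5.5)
Your proof is correct and is essentially the paper's: the same Cauchy--Schwarz factorization of $\phi_{n,R}$ through $J_\lambda$, then reflexivity of $H_\lambda$ via Lemma~\ref{lem:envelope-convexity} and Milman--Pettis, then weak-to-weak continuity of $J_\lambda^*$. The only differences are cosmetic plus one correction to your closing remark. You get the uniform bound $R\lambda(K)^{1/2}$ from $k_{n,R}\le R$, where the paper gets $\sqrt{RM}$ from $\|k_{n,R}\|_{L^2}^2\le R\|\phi_n\|$. As for the remark: since $\mu_n\le 2^n\lambda$, each $h_n\le 2^n$ $\lambda$-a.e., so each $\phi_n$ does factor through $H_\lambda$, and what the truncation buys is a bound on the factored functionals that is uniform in $n$.
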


\begin{proof}
Let $M=\sup_{n\in\mathbb{N}}\|\phi_{n}\|<\infty.$ Recall that $k_{n,R}=h_{n}\wedge R$. It follows that
\[
\|k_{n,R}\|_{L^{2}(\lambda)}^{2}\le R\int_{K} h_{n}\,d\lambda=R\mu_{n}(K)
=R\|\phi_{n}\|\le RM.
\]
Then by \eqref{truncation-function} and the Cauchy--Schwarz inequality,
\begin{equation}\label{truncation-bounded}
|\phi_{n,R}(z)|\le q_{\lambda}(z)\|k_{n,R}\|_{L^{2}(\lambda)}\le \sqrt{RM}\,q_{\lambda}(z),
\qquad z\in Z.
\end{equation}
In particular, $\phi_{n,R}$ vanishes on $\ker q_{\lambda}$. Thus the
formula
\[
g_{n,R}(J_{\lambda}z)=\phi_{n,R}(z),
\qquad z\in Z,
\]
defines a bounded linear functional on the dense subspace
$J_{\lambda}(Z)\subset H_{\lambda}$. The preceding estimate \eqref{truncation-bounded} gives
\[
\|g_{n,R}\|_{H_{\lambda}^{*}}\le \sqrt{RM}.
\]
Hence $g_{n,R}$ extends uniquely to $H_{\lambda}^{*}$, still
denoted by $g_{n,R}$, and
\[
\phi_{n,R}=J_{\lambda}^{*}g_{n,R}.
\]
By Lemma~\ref{lem:envelope-convexity}, $H_{\lambda}^{*}$ is reflexive. The set $\sqrt{RM}\,B_{H_{\lambda}^{*}}$ is consequently weakly compact. Since $J_{\lambda}^{*}:H_{\lambda}^{*}\to Z^{*}$
is weak-to-weak continuous, its image $J_{\lambda}^{*}\bigl(\sqrt{RM}\,B_{H_{\lambda}^{*}}\bigr)$ is weakly compact in $Z^{*}$. Note that $\{\phi_{n,R}:n\in\mathbb{N}\}\subset J_{\lambda}^{*} \bigl(\sqrt{RM}\,B_{H_{\lambda}^{*}}\bigr)$.
It follows that the family $\{\phi_{n,R}:n\in\mathbb{N}\}$ is relatively weakly compact in $Z^{*}$.
\end{proof}

We now turn to the proof of Theorem \ref{thm:module}.

\begin{proof}[Proof of Theorem~\ref{thm:module} over the real scalars]
Let $T:Z\to Y$ be an unconditionally converging operator. We prove that $T^{*}(B_{Y^{*}})$ is relatively weakly compact. By the Eberlein--\v{S}mulian theorem, it suffices to start with an arbitrary sequence
\[
(\phi_{n})\subset T^{*}(B_{Y^{*}})
\]
and produce a weakly convergent subsequence.

Recall that \eqref{control measure} provides a control measure $\lambda$. If $\lambda=0$, then every $\phi_{n}$ vanishes and the assertion is immediate. Suppose that $\lambda\ne0$. For every $j\ge1$, Remark \ref{rem-error} allows us to choose $R_{j}\uparrow \infty$ such that
\begin{equation}\label{eq:approximants-errors}
\sup_{n}\|\phi_{n}-\phi_{n,R_{j}}\|\le2^{-j}.
\end{equation}
For each fixed $j$, Proposition \ref{relatively weakly compact} gives that the sequence $(\phi_{n,R_{j}})_{n}$ is relatively weakly compact in $Z^{*}$. Repeatedly applying Eberlein--\v{S}mulian and then taking a diagonal subsequence, we obtain indices $(n_{k})$ and functionals $\psi_{j}\in Z^{*}$ such that
\begin{equation}\label{eq:weak-limits}
\phi_{n_{k},R_{j}}\rightharpoonup\psi_{j}
\quad\text{in }Z^{*}\quad \text{as}\quad k\to\infty
\end{equation}
for every fixed $j$.
The weak lower semicontinuity of the norm and \eqref{eq:approximants-errors} imply
\[
\|\psi_{j}-\psi_{l}\|\le2^{-j}+2^{-l},
\]
which yields that $(\psi_{j})$ is Cauchy in norm and then converges to some $\psi \in Z^{*}$. For $z^{**}\in Z^{**}$, estimates \eqref{eq:approximants-errors} and \eqref{eq:weak-limits} give
\[
|z^{**}(\phi_{n_{k}}-\psi)|\le\|z^{**}\|\,2^{-j}+|z^{**}(\phi_{n_{k},R_{j}}-\psi_{j})|+\|z^{**}\|\,\|\psi_{j}-\psi\|.
\]
Letting $k\to\infty$ and then $j\to\infty$, we obtain $\phi_{n_{k}}\rightharpoonup\psi$ in $Z^{*}$. 
We have proved that every sequence in $T^{*}(B_{Y^{*}})$ has a weakly convergent subsequence. Another application of Eberlein--\v{S}mulian shows that this set is relatively weakly compact. Hence $T^{*}$ is weakly compact, and Gantmacher's theorem implies that $T$ is weakly compact. This is property $(V)$.
\end{proof}

\section{Proof of Theorem \ref{thm:main}}\label{proof of Theorem 1.1}

We begin the proof of Theorem~\ref{thm:main} by addressing two preliminary issues before applying Theorem~\ref{thm:module} (i.e., Lemma \ref{lem:ultrapower-modulus} and Proposition \ref{stalk}). 
Let $E$ be uniformly convex, set $X=\ell_{\infty}(E)$, and identify $C(\beta\mathbb{N})$ with $\ell_{\infty}$ via the canonical isometric isomorphism. The module action is given by coordinatewise scalar multiplication.
For an ultrafilter $\mathcal{U}\in\beta\mathbb{N}$, define
\begin{equation*}\label{eq:ultrapower-definition}
N_{\mathcal{U}}=\left\{x=(x_{n})\in X:\lim_{\mathcal{U}}\|x_{n}\|=0\right\},
\qquad
E_{\mathcal{U}}=X/N_{\mathcal{U}}.
\end{equation*}
Since $N_{\mathcal{U}}$ is closed in $X$, the quotient  $E_{\mathcal{U}}$ is a Banach space.
We denote the image of $x\in X$ in this quotient by $x(\mathcal{U})=[x]_{\mathcal{U}}.$ When $\mathcal{U}$ is a principal ultrafilter, this quotient is naturally isometric to $E$, whereas for a free ultrafilter it is the usual Banach-space ultrapower of $E$. Although these facts are standard \cite{Heinrich1980}, we include the details required to verify the function-module structure. The quotient norm is given by
\begin{equation}\label{eq:ultrapower-norm}
\|x(\mathcal{U})\|=\lim_{\mathcal{U}}\|x_{n}\|.
\end{equation}
For each $x\in X$, the map $\mathcal{U}\mapsto\|x(\mathcal{U})\|$ is the continuous extension to $\beta\mathbb{N}$ of the bounded scalar sequence $(\|x_{n}\|)$. Moreover, since every principal ultrafilter belongs to $\beta\mathbb{N}$, $\sup_{\mathcal{U}\in\beta\mathbb{N}}\|x(\mathcal{U})\|=\sup_{n}\|x_{n}\|.$
The evaluation map is surjective by the definition of the quotient spaces.
Finally, if $f\in C(\beta\mathbb{N})$, then $(fx)(\mathcal{U})=f(\mathcal{U})x(\mathcal{U}).$
Therefore, the map $x\longmapsto(x(\mathcal{U}))_{\mathcal{U}\in\beta\mathbb{N}}$ realizes $X$ as a function module over $\beta\mathbb{N}$ with stalks $E_{\mathcal{U}}$ in the sense of Definition~\ref{def:module}.

\begin{lem}\label{lem:ultrapower-modulus}
If $E$ is uniformly convex, then for every ultrafilter $\mathcal{U}$,
\begin{equation}\label{eq:ultrapower-modulus}
\delta_{E_{\mathcal{U}}}(\varepsilon)\ge\delta_{E}(\varepsilon/2)>0
\qquad 0<\varepsilon\le2.
\end{equation}
Every closed subspace of $E_{\mathcal{U}}$ satisfies the same bound.
\end{lem}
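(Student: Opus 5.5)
The plan is to prove the estimate directly from the norm formula \eqref{eq:ultrapower-norm}, by lifting unit vectors of $E_{\mathcal{U}}$ to sequences in $E$ and applying the modulus of $E$ coordinatewise. The loss from $\varepsilon$ to $\varepsilon/2$ leaves room to absorb the error made when normalizing coordinates. If $\mathcal{U}$ is principal, then $E_{\mathcal{U}}\cong E$ isometrically, and the bound holds because $\delta_E$ is nondecreasing. So we only need to treat a general $\mathcal{U}$, and the argument below covers both cases at once.

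Fix $0<\varepsilon\le 2$ and unit vectors $u,v\in E_{\mathcal{U}}$ with $\|u-v\|\ge\varepsilon$. Choose representatives $x=(x_n)$ and $y=(y_n)$ in $X$. By \eqref{eq:ultrapower-norm},
\[
\lim_{\mathcal{U}}\|x_n\|=\lim_{\mathcal{U}}\|y_n\|=1,\qquad \lim_{\mathcal{U}}\|x_n-y_n\|\ge\varepsilon .
\]
Fix a small $\eta>0$. The set $S$ of indices $n$ with
\[
\bigl|\|x_n\|-1\bigr|<\eta,\qquad \bigl|\|y_n\|-1\bigr|<\eta,\qquad \|x_n-y_n\|>\varepsilon-\eta
\]
lies in $\mathcal{U}$. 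For $n\in S$, normalize: put $\hat x_n=x_n/\|x_n\|$ and $\hat y_n=y_n/\|y_n\|$. Then $\|\hat x_n-x_n\|<\eta$, so $\|\hat x_n-\hat y_n\|>\varepsilon-3\eta\ge\varepsilon/2$ once $\eta\le\varepsilon/6$. The definition \eqref{eq:modulus-definition} then gives
\[
\Bigl\|\tfrac{\hat x_n+\hat y_n}{2}\Bigr\|\le1-\delta_E(\varepsilon/2),
\]
and therefore
\[
\Bigl\|\tfrac{x_n+y_n}{2}\Bigr\|\le1-\delta_E(\varepsilon/2)+\eta .
\]
Taking the limit along $\mathcal{U}$, which only sees $S$, yields $\|(u+v)/2\|\le 1-\delta_E(\varepsilon/2)+\eta$. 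Letting $\eta\to0$ gives \eqref{eq:ultrapower-modulus}. Positivity of $\delta_E(\varepsilon/2)$ is exactly the uniform convexity of $E$.

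For the last sentence, let $W\subset E_{\mathcal{U}}$ be a closed subspace. The infimum defining $\delta_W(\varepsilon)$ runs over a subset of the pairs used for $\delta_{E_{\mathcal{U}}}(\varepsilon)$, so $\delta_W(\varepsilon)\ge\delta_{E_{\mathcal{U}}}(\varepsilon)$ whenever $W\neq\{0\}$. This is presumably how the lemma will be used, together with the later reduction to uniformly separable submodules, whose stalks are subspaces of the $E_{\mathcal{U}}$.

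The only delicate step is the bookkeeping in the normalization. One has to make sure the perturbed distance stays above $\varepsilon/2$. One also has to make sure the set $S$ belongs to $\mathcal{U}$; it does, because it is a finite intersection of sets in $\mathcal{U}$ defined by strict inequalities around the $\mathcal{U}$-limits, where for the distance condition we use $\lim_{\mathcal{U}}\|x_n-y_n\|\ge\varepsilon>\varepsilon-\eta$. In addition, the $\mathcal{U}$-limit of $\|(x_n+y_n)/2\|$ must be identified with $\|(u+v)/2\|$, which is again \eqref{eq:ultrapower-norm}. Everything else is routine.
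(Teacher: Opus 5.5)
Your proof is correct and follows essentially the same argument as the paper: lift to representatives, normalize coordinates on a $\mathcal{U}$-large set, apply the modulus $\delta_E(\varepsilon/2)$ coordinatewise, pass to the ultralimit, and read off the subspace claim from the definition of the modulus. The only difference is cosmetic: the paper replaces the representatives by their normalizations, which does not change the classes since $\lim_{\mathcal{U}}\bigl|1-\|x_n\|\bigr|=0$, and so it avoids your explicit $\eta$-bookkeeping.
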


\begin{proof}
There is nothing to prove when $E=\{0\}$. Otherwise, fix unit vectors
$u,v\in E_{\mathcal{U}}$ with $\|u-v\|\ge\varepsilon$, and choose bounded
representatives $(u_{n})$ and $(v_{n})$. Since
\[
\lim_{\mathcal{U}}\|u_{n}\|=\lim_{\mathcal{U}}\|v_{n}\|=1,
\]
we may normalize the coordinates, assigning an arbitrary fixed unit vector
at the zero coordinates, without changing either class in the quotient.
Formula~\eqref{eq:ultrapower-norm} then shows that
\[
\{n\in\mathbb{N}:\|u_{n}-v_{n}\|\ge\varepsilon/2\}\in\mathcal{U}.
\]
On this set, uniform convexity of $E$ gives
\[
\left\|\frac{u_{n}+v_{n}}{2}\right\|
\le1-\delta_{E}(\varepsilon/2).
\]
Taking the ultralimit proves \eqref{eq:ultrapower-modulus}. Finally, the definition of the modulus of convexity in \eqref{eq:modulus-definition} implies that every closed subspace of $E_{\mathcal{U}}$ satisfies the same bound.
\end{proof}

The module $X=\ell_{\infty}(E)$ need not be uniformly separable, even
when $E$ is separable. We therefore restrict the argument to a countably
generated submodule. Given a sequence $(x_{m})\subset X$, define
\begin{equation}\label{uniformly separable}
Z=\overline{\operatorname{span}}\{f x_{m}:f\in C(\beta\mathbb{N}),\ m\in\mathbb{N}\}\subset X
\end{equation}
to be the closed $C(\beta\mathbb{N})$-submodule generated by $(x_{m})$. For each $\mathcal{U}\in\beta\mathbb{N}$, let
\[
Z_{\mathcal{U}}=\{z(\mathcal{U}):z\in Z\}\subset E_{\mathcal{U}}
\]
be the evaluation image of $Z$ in $E_{\mathcal{U}}$.

\begin{prop}\label{stalk}
For each $\mathcal{U}\in\beta\mathbb{N}$, the evaluation image
\begin{equation}\label{eq:generated-fibre}
Z_{\mathcal{U}} = \overline{\operatorname{span}}\{x_{m}(\mathcal{U}) : m \in \mathbb{N}\}
\end{equation}
is a closed subspace of $E_{\mathcal U}$. Moreover, with these stalks $Z$ is a uniformly separable function module.
\end{prop}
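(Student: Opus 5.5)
We will first identify the evaluation image $Z_{\mathcal U}$, and then check the axioms of Definition~\ref{def:module} together with uniform separability.

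\textbf{Identifying $Z_{\mathcal U}$.} Let $W_{\mathcal U}=\overline{\operatorname{span}}\{x_m(\mathcal U):m\in\N\}\subset E_{\mathcal U}$. The evaluation map $z\mapsto z(\mathcal U)$ is linear with norm at most $1$, since $\|z(\mathcal U)\|\le\|z\|$. It also satisfies $(fx_m)(\mathcal U)=f(\mathcal U)x_m(\mathcal U)$. Hence it maps $\operatorname{span}\{fx_m\}$ into $\operatorname{span}\{x_m(\mathcal U)\}$, and by continuity it maps $Z$ into $W_{\mathcal U}$. So $Z_{\mathcal U}\subset W_{\mathcal U}$.

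For the reverse inclusion, which we expect to be the main point, we must show that $Z_{\mathcal U}$ is closed. Closedness does not follow directly from continuity, because the image of a closed subspace under a bounded map need not be closed. We will use the quotient structure. Put $N=N_{\mathcal U}\cap Z$. The induced map $Z/N\to E_{\mathcal U}$ is isometric, because for $z\in Z$ we have
\[
\operatorname{dist}(z,N)=\lim_{\mathcal U}\|z_n\|=\|z(\mathcal U)\|.
\]
To see this, pick a clopen set $C\in\mathcal U$, meaning $C=\overline{S}$ with $S\in\mathcal U$, on which $\|z_n\|\le\|z(\mathcal U)\|+\varepsilon$. Then $z-\mathbf 1_C z$ lies in $Z$ by the module property and lies in $N_{\mathcal U}$. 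Also $\|\mathbf 1_C z\|\le\|z(\mathcal U)\|+\varepsilon$. Since $Z/N$ is complete, the image $Z_{\mathcal U}$ is complete, hence closed. It contains every $x_m(\mathcal U)$, so it contains $W_{\mathcal U}$. This proves \eqref{eq:generated-fibre}.

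\textbf{Function-module axioms.} We then verify the axioms for $Z$ with stalks $Z_{\mathcal U}$. Every axiom except surjectivity is inherited from the module structure of $X$ established above.
\begin{enumerate}[\rm (i)]
\item Upper semicontinuity, in fact continuity, of $\mathcal U\mapsto\|z(\mathcal U)\|$ holds because this map is the continuous extension of $(\|z_n\|)$.
\item Surjectivity of evaluation onto $Z_{\mathcal U}$ holds by definition of $Z_{\mathcal U}$.
\item The module property holds because $Z$ is the closed span of a $C(\beta\N)$-invariant set and multiplication by $f$ is bounded. Closedness in $X$ is by construction.
\end{enumerate}
The sup-norm identity $\|z\|=\sup_{\mathcal U}\|z(\mathcal U)\|$ is inherited from $X$. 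Each $Z_{\mathcal U}$ is a Banach space because it is a closed subspace of $E_{\mathcal U}$.

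\textbf{Uniform separability.} Take $D$ to be the set of finite rational (or $\mathbb Q+i\mathbb Q$) linear combinations of the $x_m$. This set is countable. For each $\mathcal U$, the set $\{d(\mathcal U):d\in D\}$ consists of the rational linear combinations of the $x_m(\mathcal U)$. By \eqref{eq:generated-fibre} it is therefore dense in $Z_{\mathcal U}$.

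Finally, we note for later use that, by Lemma~\ref{lem:ultrapower-modulus}, the stalks $Z_{\mathcal U}$, being closed subspaces of $E_{\mathcal U}$, satisfy the common modulus \eqref{eq:common-modulus}.
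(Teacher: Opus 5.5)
Your proposal is correct and follows essentially the same route as the paper: you get the inclusion into $\overline{\operatorname{span}}\{x_m(\mathcal U)\}$ by continuity of evaluation. You then get closedness of $Z_{\mathcal U}$ (and hence equality) from the isometry $Z/(N_{\mathcal U}\cap Z)\to E_{\mathcal U}$, obtained by cutting $z$ off with $\mathbf 1_{\overline{S}}$ for a suitable $S\in\mathcal U$. Uniform separability comes from the countable set of rational linear combinations of the $x_m$.
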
 

\begin{proof}
Fix $\mathcal{U}\in\beta\N$ and set $N_{\mathcal{U}}^{Z}=\{z\in Z:z(\mathcal{U})=0\}$. We first prove that evaluation induces an isometry from $Z/N_{\mathcal{U}}^{Z}$ to $E_{\mathcal{U}}$. Contractivity of evaluation gives
\[
\|z(\mathcal{U})\|\le\operatorname{dist}\left(z,N_{\mathcal{U}}^{Z}\right).
\]
Conversely, given $\varepsilon>0$, by the definition of ultralimit, there exists $A\in \mathcal{U}$ such that $\sup_{n\in A}\|z_{n}\|\le \|z(\mathcal{U})\|+\varepsilon$. The closure of $A$ in $\beta\mathbb{N}$ is clopen and $\mathbf{1}_{\bar{A}}\in C(\beta \mathbb{N})$. Hence $\mathbf{1}_{\bar{A}}z\in Z$, while $z-\mathbf{1}_{\bar{A}}z\in N_{\mathcal{U}}^{Z}$. It follows that
\[
\operatorname{dist}\left(z,N_{\mathcal{U}}^{Z}\right)\le\|\mathbf{1}_{\bar{A}}z\|\le \|z(\mathcal{U})\|+\varepsilon.
\]
After letting $\varepsilon\to0$, we conclude that the canonical map from $Z/N_{\mathcal{U}}^{Z}$ to $E_{\mathcal{U}}$ is an isometry onto $Z_{\mathcal{U}}$. Since the quotient is complete, its isometric image $Z_{\mathcal{U}}$ is closed in $E_{\mathcal{U}}$.
Suppose now that \eqref{uniformly separable} holds. Continuity of evaluation
and the identity
\[
(fx_m)(\mathcal U)=f(\mathcal U)x_m(\mathcal U)
\]
show that $Z_{\mathcal U}$ is contained in the right-hand side of
\eqref{eq:generated-fibre}. The reverse inclusion follows by using constant
scalar functions and the closedness of $Z_{\mathcal U}$. Finally, the rational linear span of $x_{m}$ is countable, and its evaluation images are dense in every $Z_{\mathcal{U}}$. Hence $Z$ is uniformly separable. Moreover, the norm fields of $Z$ are restrictions of the continuous norm fields of $X$, and therefore $Z$ is a function module.
\end{proof}

We are now ready to prove Theorem~\ref{thm:main}.

\begin{proof}[Proof of Theorem~\ref{thm:main}]
We first consider the real case. By the renorming theorem of Enflo and
Pisier, a super-reflexive space admits an equivalent uniformly convex norm
\cite{Enflo1972,Pisier1975}. The induced norms on $\ell_{\infty}(E)$ are
equivalent, while property $(V)$ and the Grothendieck property are
isomorphic invariants. We may therefore assume that the given norm on $E$
is uniformly convex.

Set $X = \ell_{\infty}(E)$. Suppose that an unconditionally converging operator $T : X \to Y$ is not weakly compact. By the Eberlein--\v{S}mulian theorem, there exists a bounded sequence $(x_{m}) \subset X$ such that $(Tx_{m})$ has no weakly convergent subsequence. Let $Z$ denote the closed $C(\beta\mathbb{N})$-submodule generated by $(x_{m})$ as in \eqref{uniformly separable}. Then the restriction $T|_{Z} : Z \to Y$ remains unconditionally converging and fails to be weakly compact.
By Proposition~\ref{stalk}, $Z$ is uniformly separable and its stalk at $\mathcal{U}$ is a closed subspace of $E_{\mathcal{U}}$. Moreover, Lemma~\ref{lem:ultrapower-modulus} ensures that all these stalks share the same positive modulus $\delta_{E}(\varepsilon/2)$. Since $\beta\mathbb{N}$ is zero-dimensional, Theorem~\ref{thm:module} implies that $Z$ has property $(V)$. Consequently, $T|_{Z}$ is weakly compact, which contradicts the construction. Hence $X$ has property $(V)$.

It remains to obtain the Grothendieck property. Since $E$ is super-reflexivity, we have 
\begin{equation}\label{eq:dual-identification}
\ell_{\infty}(E)=\ell_{\infty}(E^{**})\cong\bigl(\ell_{1}(E^{*})\bigr)^{*}.
\end{equation}
Hence $X$ is a dual Banach space. Apply Proposition~\ref{prop:dual-bridge} to conclude that $\ell_{\infty}(E)$ is Grothendieck. 

The complex case follows by the realiﬁcation argument in section \ref{Introduction}.
\end{proof}

We conclude this section with several operator-theoretic consequences of
property $(V)$. These consequences show that the combination of the geometry of $E$ and the structure of $\ell_{\infty}(E)$ imposes strong restrictions on operators defined on this space.

\begin{cor}\label{remaining}
Let $E$ be super-reflexive, $X=\ell_{\infty}(E)$, and $Y$ a Banach space. Then
\begin{enumerate}[\rm (i)]
\item Every non-weakly compact operator $T : X \to Y$ is an isomorphism on a subspace of $X$ isomorphic to $\ell_{\infty}$.

\item If $Y$ is separable, then every operator $T:X\to Y$ is weakly compact.

\item Every separable Banach quotient of $X$ is reflexive.

\item If $Y$ has the Schur property, then every operator $T:X\to Y$ is compact.
\end{enumerate}
\end{cor}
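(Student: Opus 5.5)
All four items follow from Theorem~\ref{thm:main} (property $(V)$ and the Grothendieck property of $X$) together with standard facts. No further function-module analysis is needed.

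For (i), we combine property $(V)$ of $X$ with the dual representation \eqref{eq:dual-identification}. Since $E$ is super-reflexive, hence reflexive, $X\cong(\ell_1(E^*))^*$ is isomorphic to a dual space, and in particular to a complemented subspace of one. Proposition~\ref{prop:dual-bridge} then gives (i) directly. The renorming of $E$ to a uniformly convex norm is harmless, since fixing a copy of $\ell_\infty$ is an isomorphic notion.

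For (ii), $X$ is Grothendieck by Theorem~\ref{thm:main}. By the operator characterization recalled in the introduction, every operator from a Grothendieck space into a separable space is weakly compact. To make this self-contained, suppose $T:X\to Y$ with $Y$ separable. Then $B_{Y^*}$ is weak-$*$ metrizable, so every sequence in $T^*(B_{Y^*})$ has a weak-$*$ convergent subsequence. By the Grothendieck property that subsequence converges weakly in $X^*$. Eberlein--\v{S}mulian shows $T^*$ is weakly compact, and Gantmacher gives that $T$ is weakly compact.

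For (iii), let $Q:X\to Y$ be a quotient map onto a separable $Y$. By (ii), $Q$ is weakly compact, so $Q(B_X)$ is relatively weakly compact. By the open mapping theorem $Q(B_X)$ contains a ball $rB_Y$. Hence $B_Y$ is weakly compact and $Y$ is reflexive.

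For (iv), let $Y$ have the Schur property and $T:X\to Y$. The main point is to show $T$ is weakly compact, and the plan is to use property $(V)$ rather than (ii), since $Y$ need not be separable. Let $\sum x_n$ be w.u.c.\ in $X$. Then $\sum Tx_n$ is w.u.c.\ in $Y$. A Schur space contains no copy of $c_0$, because the unit vector basis of $c_0$ is weakly null but not norm null. By Bessaga--Pe\l czy\'nski, in a space without $c_0$ every w.u.c.\ series converges unconditionally. Thus $T$ is unconditionally converging, and property $(V)$ yields that $T$ is weakly compact. Now take any bounded sequence $(x_m)$. By Eberlein--\v{S}mulian, $(Tx_m)$ has a weakly convergent subsequence, and by the Schur property that subsequence converges in norm. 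Hence $T$ is compact. The only potential subtlety is the Bessaga--Pe\l czy\'nski step; the remaining steps are routine.
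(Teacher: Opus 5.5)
Your proof is correct. Items (i), (iii) and (iv) follow the paper's argument essentially step for step:
\begin{itemize}
\item (i): Proposition~\ref{prop:dual-bridge} applied via the dual representation $X\cong(\ell_1(E^*))^*$.
\item (iii): weak compactness of the quotient map, then the open mapping theorem and Kakutani.
\item (iv): no $c_0$ in a Schur space, then Bessaga--Pe\l czy\'nski, property $(V)$, and finally Eberlein--\v{S}mulian with the Schur property.
\end{itemize}

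The one genuine difference is in (ii). The paper deduces (ii) from (i) in a single line: a separable $Y$ cannot contain the nonseparable space $\ell_\infty$, so no operator into $Y$ can fix a copy of it. You instead argue from the Grothendieck property alone. You use weak$^*$-sequential compactness of $B_{Y^*}$ for separable $Y$ and weak$^*$-continuity of $T^*$, then Grothendieck to upgrade weak$^*$ to weak convergence, and finish with Eberlein--\v{S}mulian and Gantmacher.

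\begin{itemize}
\item \emph{What your route buys:} it is the standard characterization recalled in the introduction. It works for any Grothendieck space, without the $\ell_\infty$-fixing conclusion (although in the paper the Grothendieck property is itself obtained from Proposition~\ref{prop:dual-bridge}).
\item \emph{What the paper's route buys:} it is shorter given (i), and it proves more. It gives weak compactness of every operator from $X$ into any $Y$ containing no copy of $\ell_\infty$, separable or not.
\end{itemize}

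A minor point in (iii): say explicitly that $rB_Y$ is weakly closed. Since $rB_Y$ lies inside the weakly compact weak closure of $Q(B_X)$, this is what makes it weakly compact.
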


\begin{proof}
(i) follows immediately from Theorem~\ref{thm:main}, the dual representation \eqref{eq:dual-identification}, and Proposition~\ref{prop:dual-bridge}.
If $Y$ is separable, it cannot contain a subspace isomorphic to the nonseparable space $\ell_{\infty}$. Hence (ii) follows immediately from (i).

(iii) Let $T : X \to Y$ be a quotient map with $Y$ separable. By (ii), $T$ is weakly compact, and hence $T(B_{X})$ is relatively weakly compact in $Y$. Since $T$ is a surjective bounded linear operator, the open mapping theorem yields a constant $c > 0$ such that $B_{Y} \subset cT(B_{X}).$ On the other hand, $B_{Y}$ is weakly closed, since it is a norm-closed convex subset of $Y$. Hence $B_{Y}$ is weakly compact. It follows from Kakutani's theorem that $Y$ is reflexive. 

(iv) Let $Y$ have the Schur property. Since $Y$ contains no copy of $c_{0}$, the Bessaga--Pe\l czy\'{n}ski selection principle implies that every operator $T : X \to Y$ is unconditionally converging. By property (V), $T$ is weakly compact. Hence $T(B_{X})$ is relatively weakly compact. By the Eberlein--\v{S}mulian theorem, every sequence in $T(B_{X})$ has a weakly convergent subsequence, which is norm convergent by the Schur property. Therefore $T(B_{X})$ is relatively norm compact, which proves that $T$ is compact.

\end{proof}

\section*{Acknowledgement}

This work was supported by the National Natural Science Foundation of China (Grant Nos. 12125109, W2411005, 12671165 \& 125B2010 ); the Natural Science Foundation of Hunan Province (Grant Nos. 2025ZYJ002, 2024JJ1010 \& 2024RC3040);  the Science and Technology Innovation Program of Hunan Province (Grant No. 2024RC1016); and the Scientific Research Fund of Hunan Provincial Education Department (Grant Nos. 25A0009 \& 25B0008)

\end{document}